\newcommand{\real}{\mathbb{R}}
\newcommand{\Diag}{\mathop{\mathrm{diag}}}
\newcommand{\calS}{{\cal S}}
\newcommand{\barx}{\bar{x}}
\newcommand{\diag}{\mathop{\mathrm{diag}}}
\renewcommand{\Re}{{\mathbb R}}
\newcounter{algo}
\newenvironment{algo}[1]{\refstepcounter{algo}  
\begin{center}
\begin{minipage}{0.9\textwidth}   \hrule\smallskip
\textbf{Algorithm \thealgo: #1}
\par\smallskip\hrule\smallskip\ignorespaces}{\par\smallskip\hrule
\end{minipage}
\end{center}
}{}
\begin{document}

\title{A Potential Reduction Method for Canonical Duality, with an Application to the Sensor Network Localization Problem
}

\titlerunning{Potential Reduction Method for Canonical Duality}        

\author{Vittorio Latorre  
}

\authorrunning{V. Latorre} 

\institute{V. Latorre $\cdot$\at
Department of Computer, Control and Management Engineering,
``Sapienza'', University of Rome,  Via Ariosto 25,
00185, Rome Italy\\              
              \email{latorre@dis.uniroma1.it}           
           }

\date{Received: date / Accepted: date}

\maketitle

\begin{abstract}
We propose to solve large size instances of the non-convex optimization problems reformulated with canonical duality theory. To this aim we propose an interior point potential reduction algorithm based on the solution of the primal-dual total complementarity (Lagrange) function. We establish the global convergence result for the algorithm under mild assumptions and demonstrate the method on instances of the Sensor Network Localization problem. Our numerical results are promising and show the possibility of devising efficient interior points methods for non-convex duality.

\keywords{ Interior Points Methods \and Canonical duality theory \and Global Optimization \and Sensor Localization Problem}
\end{abstract}
\section{Introduction}
We want to introduce a framework to solve 
the following saddle point problem:
\begin{equation}\label{eq: intsad}
\min_{x\in\real^n}\max_{\sigma\in\real^m} 
\displaystyle\Xi(x,\sigma)=\frac{1}{2} x^TG(\sigma)x-F(\sigma)^Tx-V^*(\sigma)\vspace{0.5em}, \quad s.t.\quad G(\sigma)\succeq 0
\end{equation}
Where $\succeq$ indicates that $G$ is positive semidefinite, $G(\sigma)$ is a $n\times n$ symmetric matrix such that the map $G(\sigma):\real^n\rightarrow \real^{n\times n}$ is positive semidefinite convex, that is:
$$
G(t\sigma_1+(1-t)\sigma_2)\succeq tG(\sigma_1)+(1-t)G(\sigma_2),\quad \forall \sigma_1,\sigma_2\in\real^m,\forall t\in(0,1).
$$
$V^*(\sigma)$ is a convex and two times continuosly differentiable function in $\sigma$. it is easy to notice that Problem (\ref{eq: intsad}) is convex in $x$ for every $\sigma$ such that $G(\sigma)\succeq 0$ and it is concave for every $\sigma$.

Such problem arises from the reformulation of non-convex optimization problems in Canonical Duality Theory. Canonical duality is a methodology to formulate the dual of non-convex optimization problems without any duality gap between the stationary points of the primal problem and the stationary points of the dual problem. The interest in canonical duality is not only due to the absence of duality gap, but also for the possibility to define global optimality conditions for many of such non-convex optimization problems. In the recent years, canonical duality theory has been applied
in biology,  engineering, sciences \cite{gao-cace09,wang-etal}, and recently in network communications \cite{g-r-p,ruan-gao-ep}, radial basis neural networks \cite{LaG 14}, constrained optimization \cite{lag 13}, and mechanics \cite{LSG14}.

In spite of its theoretical prowess and range of applications, there are few results regarding the numerical solution of problems formulated with canonical duality theory. In \cite{wang-etal} several mid-sized instances of the maximum cut problem are solved, to a maximum of 500 variables, with good performances in terms of speed, however no convergence result is given. A convergence result is given in \cite{WLG12}, however not only the assumptions on the convergence are rather strong, but also the reported computational experience is not exhaustive and only few small/mid sized problems are presented. In a more recent work on the application of canonical duality theory to Quasi-Variational Inequalities \cite{LS14}, the authors reformulate problem (\ref{eq: intsad}) as a monotone Variational Inequality (VI) and are able to solve high dimensional problems with several thousand of variables, without giving any convergence result, but suggesting that the methodology could have some interesting proprieties.

In this paper we partially resume the approach presented in \cite{LS14}. We consider the Karush-Kunt-Tucker conditions of the monotone variational inequality associated with (\ref{eq: intsad}), reformulate the problem as a system of constrained equations and then apply an interior point method.
Our main contributions consist in:
\begin{itemize}
\item A general interior point method for finding a solution of problem (\ref{eq: intsad}) which convergence result can be achieved under mild assumptions;
\item A numerical testing on problems with several thousands of variables, which solutions are reached efficiently.
\end{itemize}
The framework we present is quite general. However, for many non-convex problems reformulated with canonical duality, it is possible to simplify the feasible set and use linear constraints instead of matrix constraints. Such modification greatly reduces the computational burden on the algorithm. Showing that such simplifications are possible 
in the proposed framework is another contribution of this paper.

The approach we consider is a potential reduction algorithm based on the damped Newton method reported in \cite{FaP 03} and \cite{MP99}. The framework of this algorithm rests on six main assumptions on the operator, the feasible set and the potential reduction merit function. 
The convergence result easily follows once it is proved that the proposed methodology satisfies these assumptions. The same framework has been applied to Generalized Nash Equilibrium Problems \cite{DFKS 11} and more recently to Quasi-Variational Inequalities \cite{FKS 13}, providing in both cases new important benchmarks to solve these problems.

The proposed approach is tested on instances of the Sensor Network Localization (SNL) problem \cite{BY04,BY06,KKW09,ts07}. Such problem arises in monitoring and controlling applications using wireless sensor networks such as inventory management and gathering environmental data. The problem is also related to distance geometry problems arising in predicting molecule structures and to graph rigidity. SNL consists in locating $N$ sensors of unknown position knowing the positions of $N_a$ sensors, called anchors, and the sensor-sensor, sensor-anchor distances. The methods to solve this problem are based on semidefinite programming relaxations, but in many cases these relaxations are not good enough to give a satisfying solution. Such problem can also be reformulated as a non-convex least squares optimization problem \cite{nie09}, but the presence of many local minima makes the global solution difficult to find with traditional local search approaches. 

The SNL problem was studied with canonical duality in \cite{ruan-gao-ep}, where the global optimality conditions are reported, but with limited numerical experience. The importance and the difficulty of SNL problem, especially for large sized instances, makes it suitable to test the presented framework. Furthermore, as we already mentioned, it is possible to show that the approach can be adapted to the particular proprieties of the SNL problem.

The paper is organized as follows. In the next Section we introduce a brief review on canonical duality, in order to make clear the motivations of this paper and the wide range of problems to which the proposed framework can be applied. In the first part of Section 3 we reformulate problem (\ref{eq: intsad}) as a system of equations, while in the second part we briefly report the key assumptions of the framework introduced in \cite{MP99} and present the interior point method together with its convergence proprieties and the boundedness of the generated sequence. In Section 4 we analyze the application of canonical duality to the SNL problem, and show how the general framework can be adapted to solve this problem efficiently. In Section 5 we report the numerical results on large size instances of the SNL problem. Finally in Section 6 we report the conclusions.

\emph{Notation.} For a given subset of $S$ of $\real^n$ we let int $S$, cl $S$, and bd $S$ denote, respectively, the interior, the closure and the boundary of $S$; Given a set $\cal A$ we indicate with $|{\cal A}|$ the number of elements in $\cal A$. 
 If the mapping $H:\real^n\rightarrow\real^n$ is differentiable in a point $x$ in its domain, the Jacobian matrix of $H$ at $x$ is denoted $JH(x)$. 

The set of real matrices with $n$ rows and $m$ columns is defined as $\real^{n\times m}$; the set of $n-dimensional$ squared and symmetric matrices is denoted as $\calS^n$; given a matrix $A$, we denote with $a_{ij}$ its element on the $i^{th}$ row  and $j^{th}$ column. The inner product defined on the set $	\real^{n\times n}$ of squared matrices is given by
$$
X\bullet Y= tr(X^TY),\quad (X;Y)\in \real^{n\times n},
$$
where ``tr'' denotes the trace of a matrix. This inner product induces the Frobenius norm for matrices given by
$$
\|X \|_F= \sqrt{tr(X^TX)}, \quad X \in \real^{n\times n}.
$$
Given a mapping $F(x,Y):\real^n\times\calS^n\rightarrow\real^n\times\calS^n$ defined as:
$$
F(x,Y)=
\left(\begin{array}{l}
g(x,Y)\\
h(x,Y)\\
\end{array}\right),
$$
with $g(x,Y):\real^n\times\calS^n\rightarrow\real^n$ and $h(x,Y):\real^n\times\calS^n\rightarrow\calS^n$,
 a vector $\bar{x}\in\real^n$ and a matrix $\bar{Y}\in\calS^n$, with a small abuse of notation we define the product between the mapping and the elements of $\real^n\times\calS^n$ as:
$$
F(x,Y)\bullet(\bar{x},\bar{Y})=g(x,Y)^T\bar{x}+h(x,Y)\bullet\bar{Y}.
$$
The subsets of $\calS^n$ consisting of the positive semidefinite and positive definite matrices are denoted by $\calS_+^n$ and $\calS_{++}^n$ respectively. For two matrices $A$ and $B$ in $\calS^n$, we write $A\succeq B$ if $A-B\in \calS^n_+$; similarly, $A\succ B$ means $A-B\in\calS_{++}^n$; furthermore we define $\preceq$ and $\prec$ such that  $A\preceq B$ if $-A\succeq -B$ and $A \prec B$ if $-A \succ -B$. $\Re^n_+ \subset \Re^n$ denotes the set of nonnegative numbers in $\Re^n$; $\Re^n_{++} \subset \Re^n$ denotes the set of positive numbers in $\Re^n$; sta$\{f(x) : x \in {\cal X}\}$ denotes the set of stationary points of function $f$ in ${\cal X}$;  $\diag(a)$ denotes the (square) diagonal matrix whose diagonal entries are the elements of the vector $a$;
$vect\{A\}$ denotes the vector $\in \real^{n^2}$ such that the first n elements are the elements in the first column of $A$, the elements from $n+1$ to $2n$ are the elements in the second column of $A$ and so on till the last $n$ elements that correspond to the elements in the $n^{th}$ column of $A$;
 $\circ$ denotes the Hadamard (component-wise) product operator; ${\bf 0}_n$ denotes the origin in $\real^n$, likewise ${\bf 0}_{n\times m}$ denotes the origin in $\real^{n\times m}$. If no index is indicated, the dimension of $\bf 0$ is deduced from the context;  ${\bf 1}_n$ denotes the vectors of all ones in $\real^n$; $I_n$ denotes the identity matrix in $\real^{n\times n}$.

\label{Introduction}

\section{Brief Review on Canonical Duality}
Canonical duality theory is composed mainly of 1) a canonical transformation; 2) a complementary-dual principle;
3) a triality theory.
This theory can be   demonstrated
by solving the following
general non-convex problem:
 $$
({\cal P}): \min_{x \in {\mathbb R}^n}\left\{P(x)=W(x)+\frac{1}{2} x^TAx-c^Tx \right\},
$$
where $W(x)$ is a general non-convex term in the objective function, $A\in\calS^n$ and $c\in \real^n$.
The key idea of canonical dual transformation is to choose a certain geometrically reasonable non-linear measure or operator:
$$ 
\xi=\Lambda(x):  \real^n\rightarrow{\cal E}_a\subseteq\real^m
$$
such that the non-convex functional $W(x)$ can be rewritten as
\begin{equation}\label{eq: primtra}
W(x)=V(\Lambda(x))=V(\xi): {\cal E}_a\rightarrow \real,
\end{equation}
where $V$ is a convex function in $\xi$. Consequently the primal problem can be rewritten in the following form:
$$
 \min_{x \in {\mathbb R}^n}\left\{P(x)=V(\Lambda(x))+\frac{1}{2} x^TAx-c^Tx \right\}.
$$
 As $V(\xi)$ is convex and differentiable, it is possible to apply the Legendre transformation and define its Legendre conjugate:
$$
V^*(\sigma)=sta\{\xi^T\sigma-V(\xi):\xi\in {\cal E}_a \}, \quad V^*(\sigma):\calS_a\rightarrow \real,
$$
Where $\sigma$ is the dual variable defined as
\begin{equation}\label{eq: dualmap}
\sigma=\nabla V(\xi): {{\cal E}_a}\rightarrow {{\cal S}_a\subseteq \real^m},
\end{equation}
and the feasible set $\calS_a$ is:
$$
\calS_a=\left\{\sigma: sta\{\xi^T\sigma-V(\xi):\xi\in {\cal E}_a \}< \infty \right\}.
$$
By the proprieties of the Legendre transformation, $V^*(\sigma)$ is uniquely defined and convex, furthermore the Fenchel-Young equality in convex systems holds:
\begin{equation}\label{eq: fy}
\xi^T\sigma=V(\xi)+V^*(\sigma),
\end{equation}
together with the following two relations:
$$
\sigma=\nabla V(\xi) \Leftrightarrow \xi=\nabla V^*(\sigma).
$$
By exploiting the Fenchel-Young equality, it is possible to recast the primal problem $P(x)$ as
$$
\Xi(x,\sigma)=\Lambda(x)^T\sigma-V^*(\sigma)+\frac{1}{2} x^TAx-c^Tx,
$$
which is the Total Complementarity Function in canonical duality. It is possible to show that if  the operator $\Lambda(x)$ is chosen linear, this function corresponds to the Lagrangian function, therefore $\Xi(x,\sigma)$ is also regarded as the extended Lagrangian in non-convex optimization.

In many real-world applications, the geometrically nonlinear operator $\Lambda(x)$ is usually a quadratic function, say
\begin{equation}\label{eq: quadop}
\Lambda(x)=\left\{\frac{1}{2}x^T C_k x-x^Tb_k\right\}^m: \Re^n\rightarrow {\cal E}_a\subset \Re^m.
\end{equation}
In the following we present the transformation for a general quadratic operator to simplify the exposition. However the theory can be easily extended in the case of convex and  non-convex operators  by the use of the sequential canonical dual transformation \cite{GaoBook 2000}.
With operator (\ref{eq: quadop}) the total complementarity function can be reformulated as:
\begin{equation}\label{eq: genxi}
\displaystyle\Xi(x,\sigma)=\frac{1}{2} x^TG(\sigma)x-F(\sigma)^Tx-V^*(\sigma)\vspace{0.5em},\\
\end{equation}
$$
\displaystyle G(\sigma)=A+\sum_{k=1}^m C_k\sigma_k, \quad F(\sigma)=c+\sum_{k=1}^{m}\sigma_k b_k.
$$
The dual is obtained  by exploiting the stationarity conditions of (\ref{eq: genxi}) in the primal variable:
$$
\nabla_x \Xi(x,\sigma)={\bold0}_n  \Rightarrow x= G(\sigma)^{-1} F(\sigma),
$$
and substituting the newfound value in the total complementarity function:
\begin{equation}\label{eq: dual}
P^d(\sigma)=-\frac{1}{2}F(\sigma)^TG(\sigma)^{-1}F(\sigma)-V^*(\sigma).
\end{equation}
We now report the properties of the obtained dual formulation.

\begin{theorem}\label{th: tcp} $(${\bf Complementarity-Dual Principle \cite{GaoBook 2000}}$)$
The function $P^d(\sigma)$ is canonically dual to $P(x)$ in the sense that if $\bar\sigma$ is a critical point of $P^d(\sigma)$ then the vector:
\begin{equation}\label{eq: xinsig}
\bar{x}=G^{-1}(\bar\sigma)F(\bar\sigma)
\end{equation}
is a critical point of $P(x)$ and 
$$
P(\bar{x})=\Xi(\bar{x},\bar\sigma)=P^d(\bar\sigma).
$$
Conversely, if $\bar{x}$ is a solution of $P(x)$, it must be in the form (\ref{eq: xinsig}) for a critical solution $\bar\sigma$ of $P^d(\sigma)$.
\end{theorem}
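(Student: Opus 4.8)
The plan is to play the two families of stationarity conditions of the total complementarity function $\Xi(x,\sigma)$ in (\ref{eq: genxi}) off against one another, using the Fenchel--Young equality (\ref{eq: fy}) and the Legendre equivalence $\sigma=\nabla V(\xi)\Leftrightarrow\xi=\nabla V^*(\sigma)$ to translate between primal and dual quantities. First I would record the two partial gradients of $\Xi$. Differentiating (\ref{eq: genxi}) in $x$ gives $\nabla_x\Xi(x,\sigma)=G(\sigma)x-F(\sigma)$, so the defining formula $\bar x=G^{-1}(\bar\sigma)F(\bar\sigma)$ of (\ref{eq: xinsig}) is exactly the condition $\nabla_x\Xi(\bar x,\bar\sigma)={\bf 0}_n$. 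Differentiating in $\sigma_k$ and using the quadratic form (\ref{eq: quadop}) of $\Lambda$ together with $G(\sigma)=A+\sum_kC_k\sigma_k$ and $F(\sigma)=c+\sum_k\sigma_kb_k$, one gets $\partial\Xi/\partial\sigma_k=\frac{1}{2}x^TC_kx-b_k^Tx-\partial V^*/\partial\sigma_k$, that is, $\nabla_\sigma\Xi(x,\sigma)=\Lambda(x)-\nabla V^*(\sigma)$.

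For the forward direction, on the set where $G(\sigma)$ is nonsingular we have $P^d(\sigma)=\Xi(x(\sigma),\sigma)$ with $x(\sigma)=G(\sigma)^{-1}F(\sigma)$, and since $\nabla_x\Xi(x(\sigma),\sigma)={\bf 0}_n$ there, the envelope theorem gives $\nabla P^d(\sigma)=\nabla_\sigma\Xi(x(\sigma),\sigma)=\Lambda(x(\sigma))-\nabla V^*(\sigma)$. Hence if $\bar\sigma$ is a critical point of $P^d$, then $\bar x:=x(\bar\sigma)$ satisfies $\Lambda(\bar x)=\nabla V^*(\bar\sigma)$, equivalently $\bar\sigma=\nabla V(\Lambda(\bar x))$. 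By the chain rule, $\nabla P(\bar x)=\sum_{k=1}^m[\nabla V(\Lambda(\bar x))]_k(C_k\bar x-b_k)+A\bar x-c$; substituting $\nabla V(\Lambda(\bar x))=\bar\sigma$ collapses this to $\big(A+\sum_k\bar\sigma_kC_k\big)\bar x-\big(c+\sum_k\bar\sigma_kb_k\big)=G(\bar\sigma)\bar x-F(\bar\sigma)$, which vanishes by the choice of $\bar x$, so $\bar x$ is a critical point of $P$. For the value identities, $P^d(\bar\sigma)=\Xi(\bar x,\bar\sigma)$ holds by the construction of $P^d$, and inserting the Fenchel--Young equality $\Lambda(\bar x)^T\bar\sigma=V(\Lambda(\bar x))+V^*(\bar\sigma)$ into $\Xi(\bar x,\bar\sigma)=\Lambda(\bar x)^T\bar\sigma-V^*(\bar\sigma)+\frac{1}{2}\bar x^TA\bar x-c^T\bar x$ leaves precisely $W(\bar x)+\frac{1}{2}\bar x^TA\bar x-c^T\bar x=P(\bar x)$.

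For the converse, let $\bar x$ be a solution of $({\cal P})$, so $\nabla P(\bar x)={\bf 0}_n$, and set $\bar\sigma:=\nabla V(\Lambda(\bar x))$, which is well defined since $V$ is convex and differentiable. The same chain-rule identity as above gives $G(\bar\sigma)\bar x-F(\bar\sigma)={\bf 0}_n$, hence $\bar x=G^{-1}(\bar\sigma)F(\bar\sigma)$, i.e. $\bar x$ has the form (\ref{eq: xinsig}); moreover $\bar\sigma=\nabla V(\Lambda(\bar x))$ is equivalent to $\Lambda(\bar x)=\nabla V^*(\bar\sigma)$, so $\nabla P^d(\bar\sigma)=\Lambda(\bar x)-\nabla V^*(\bar\sigma)={\bf 0}_m$, showing that $\bar\sigma$ is a critical point of $P^d$.

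The step that requires care is the domain and regularity issue underlying the envelope computation: the identity $\nabla P^d(\sigma)=\Lambda(x(\sigma))-\nabla V^*(\sigma)$ and the passage $\bar x=G^{-1}(\bar\sigma)F(\bar\sigma)$ presuppose that $G(\bar\sigma)$ is nonsingular and that $x(\cdot)$ is differentiable near $\bar\sigma$. The former is implicit in the very appearance of $G^{-1}(\bar\sigma)$ in (\ref{eq: xinsig}); the latter then follows from the implicit function theorem applied to $G(\sigma)x=F(\sigma)$, while the convexity and differentiability of $V$, and the consequent properties of $V^*$, legitimize the Legendre relations and the Fenchel--Young equality used throughout.
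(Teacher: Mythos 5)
The paper does not prove this theorem at all: it is quoted as a known result from the cited monograph of Gao, so there is no in-paper argument to compare against. Your proof is correct and is the standard derivation of the complementary-dual principle. The two gradient identities $\nabla_x\Xi=G(\sigma)x-F(\sigma)$ and $\nabla_\sigma\Xi=\Lambda(x)-\nabla V^*(\sigma)$ are computed correctly from the quadratic operator (\ref{eq: quadop}), the envelope-theorem step $\nabla P^d(\sigma)=\nabla_\sigma\Xi(x(\sigma),\sigma)$ is legitimate because $\nabla_x\Xi(x(\sigma),\sigma)={\bf 0}_n$ by construction of $x(\sigma)$, and the translation between $\bar\sigma=\nabla V(\Lambda(\bar x))$ and $\Lambda(\bar x)=\nabla V^*(\bar\sigma)$ via the Legendre relations, together with the Fenchel--Young equality (\ref{eq: fy}) for the zero-duality-gap identity $P(\bar x)=\Xi(\bar x,\bar\sigma)=P^d(\bar\sigma)$, is exactly the intended mechanism. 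You are also right to single out the invertibility of $G(\bar\sigma)$ as the hidden regularity hypothesis: it is tacit in the very formula (\ref{eq: xinsig}) and in the definition (\ref{eq: dual}) of $P^d$, and in the converse direction it is what guarantees that the stationarity relation $G(\bar\sigma)\bar x=F(\bar\sigma)$ can be solved back for $\bar x$ in the stated form; one should also note that $\bar\sigma=\nabla V(\Lambda(\bar x))$ automatically lies in $\calS_a$, so $V^*(\bar\sigma)$ and hence $P^d(\bar\sigma)$ are well defined. No gaps beyond these explicitly acknowledged hypotheses.
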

The result of this theorem clearly states that there is no duality gap between the critical points of the primal and the corresponding critical points in the dual problem, even if the primal problem is non-convex.
Theorem \ref{th: tcp} has extensive applications in nonconvex
analysis and global optimization \cite{gao-cace09}.
Note that the  feasible set $\calS_a$ is not convex, then
 in  order to identify the extremality property of the critical solutions and the global optimality conditions,
 we need to introduce the following subsets of $\calS_a$:
\[
\calS^+_a = \{\sigma\in \calS_a|\; G(\sigma) \succeq  0\}, \;\;
 \calS^-_a = \{\sigma\in \calS_a|\; G(\sigma) \prec 0\}.
\]
\begin{theorem}\label{th: triality} $(${\bf Triality Theory \cite{gao-jogo00}}$)$
Given a critical point $(\bar{x},\bar\sigma)$  of $\Xi(x,\sigma)$,
the following  three extremality conditions hold:
\begin{enumerate}
\item {\bf Global Optimum}: The critical solution $\bar{x}$ is the unique global minimizer of $P(x)$ if and only if $\bar\sigma\in\calS_a^+$ is the global maximizer of $P^d(\sigma)$ on $\calS_a^+$ i.e.
\begin{equation}\label{eq: optcon}
\min_{x\in \real^n}P(x)=P(\bar{x})=\Xi(\bar{x},\bar\sigma)=P^d(\bar\sigma)=\max_{\sigma\in\calS_a^+}P^d(\sigma).
\end{equation}
\item {\bf Local Maximum}: if $\bar\sigma\in\calS_a^-$ then $\bar\sigma$ is a local maximizer of $P^d(\sigma)$ on its neighborhood $\calS_o\subset \calS_a^-$ if and only if $\bar{x}$ is a local maximizer of $P(x)$ on its neighborhood ${\cal X}_o\in\real^n$, i. e.
$$
\max_{x\in {\cal X}_o}P(x)=P(\bar{x})=\Xi(\bar{x},\bar\sigma)=P^d(\bar\sigma)=\max_{\sigma\in\calS_o}P^d(\sigma).
$$
\item {\bf Local Minimum}: if $\bar\sigma\in\calS_a^-$ and $n=m$ then $\bar\sigma$ is a local minimizer of $P^d(\sigma)$ on its neighborhood $\calS_o\subset \calS_a^-$ if and only if $\bar{x}$ is a local minimizer of $P(x)$ on its neighborhood ${\cal X}_o\in\real^n$, i. e.
$$
\min_{x\in {\cal X}_o}P(x)=P(\bar{x})=\Xi(\bar{x},\bar\sigma)=P^d(\bar\sigma)=\min_{\sigma\in\calS_o}P^d(\sigma).
$$
\end{enumerate}
\end{theorem}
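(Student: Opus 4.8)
The plan is to obtain all three statements from two observations: $P$ and $P^{d}$ are both partial extremizations of the single total complementarity function $\Xi$, and one has the completion--of--squares identity
\[
\Xi(x,\sigma)=\tfrac12\bigl(x-x(\sigma)\bigr)^{T}G(\sigma)\bigl(x-x(\sigma)\bigr)+P^{d}(\sigma),\qquad x(\sigma):=G(\sigma)^{-1}F(\sigma),
\]
valid wherever $G(\sigma)$ is nonsingular. First I would record the two representations: by the Fenchel--Young equality (\ref{eq: fy}), $\max_{\sigma\in\calS_{a}}\Xi(x,\sigma)=V(\Lambda(x))+\tfrac12x^{T}Ax-c^{T}x=P(x)$ for every $x$; and the identity above gives $P^{d}(\sigma)=\min_{x\in\real^{n}}\Xi(x,\sigma)$ when $G(\sigma)\succ0$ and $P^{d}(\sigma)=\max_{x\in\real^{n}}\Xi(x,\sigma)$ when $G(\sigma)\prec0$. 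At the critical point $(\bar x,\bar\sigma)$ Theorem~\ref{th: tcp} already gives $P(\bar x)=\Xi(\bar x,\bar\sigma)=P^{d}(\bar\sigma)$, so in each case it remains only to compare $P$ and $P^{d}$ with this common value.

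For statement~1 the ``if'' direction is immediate from the identity: if $\bar\sigma\in\calS_{a}^{+}$, then for every $x$
\[
P(x)\ \ge\ \Xi(x,\bar\sigma)\ =\ P(\bar x)+\tfrac12(x-\bar x)^{T}G(\bar\sigma)(x-\bar x)\ \ge\ P(\bar x),
\]
strictly for $x\ne\bar x$ when $G(\bar\sigma)\succ0$ (the singular case being handled by a limiting argument inside $\calS_{a}^{+}$), so $\bar x$ is the unique global minimizer of $P$; and for any $\sigma\in\calS_{a}^{+}$, $P^{d}(\sigma)=\min_{x}\Xi(x,\sigma)\le\Xi(\bar x,\sigma)\le\max_{\sigma'\in\calS_{a}}\Xi(\bar x,\sigma')=P(\bar x)=P^{d}(\bar\sigma)$, which is (\ref{eq: optcon}). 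For the converse I would take any maximizer $\sigma^{\star}$ of $P^{d}$ over $\calS_{a}^{+}$: by the ``if'' part $x^{\star}:=x(\sigma^{\star})$ is the unique global minimizer of $P$, hence $x^{\star}=\bar x$, and then $\bar\sigma=\nabla V(\Lambda(\bar x))=\nabla V(\Lambda(x^{\star}))=\sigma^{\star}\in\calS_{a}^{+}$, as required.

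Statements~2 and~3 are local and I would prove them at the level of second--order conditions at $(\bar x,\bar\sigma)$, under the natural non--degeneracy ($\nabla^{2}V(\bar\xi)\succ0$ with $\bar\xi:=\Lambda(\bar x)$, and $\nabla\Lambda(\bar x)$ of full rank). With $B:=\nabla^{2}_{x\sigma}\Xi(\bar x,\bar\sigma)$ (the $n\times m$ matrix whose $k$-th column is $C_{k}\bar x-b_{k}$), the Hessian of $\Xi$ at the critical point is $\bigl(\begin{smallmatrix}G(\bar\sigma)&B\\ B^{T}&-\nabla^{2}V^{*}(\bar\sigma)\end{smallmatrix}\bigr)$, whose two Schur complements are $\nabla^{2}P(\bar x)=G(\bar\sigma)+B[\nabla^{2}V^{*}(\bar\sigma)]^{-1}B^{T}$ (eliminating $\sigma$) and $\nabla^{2}P^{d}(\bar\sigma)=-\nabla^{2}V^{*}(\bar\sigma)-B^{T}G(\bar\sigma)^{-1}B$ (eliminating $x$), with $\nabla^{2}V^{*}(\bar\sigma)=[\nabla^{2}V(\bar\xi)]^{-1}$. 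On $\calS_{a}^{-}$ one has $G(\bar\sigma)\prec0$ and $-\nabla^{2}V^{*}(\bar\sigma)\prec0$, so the two Schur--complement criteria for negative definiteness of the block matrix coincide, giving $\nabla^{2}P(\bar x)\prec0\iff\nabla^{2}\Xi(\bar x,\bar\sigma)\prec0\iff\nabla^{2}P^{d}(\bar\sigma)\prec0$; this is statement~2, and to pass from this second--order equivalence to genuine local maxima one uses the strict concavity of $\Xi(\cdot,\sigma)$ on $\calS_{a}^{-}$ and of $\Xi(x,\cdot)$ together with the non--degeneracy. For statement~3, when $n=m$ the matrix $B$ is square and invertible; setting $K:=-B^{-1}G(\bar\sigma)B^{-T}\succ0$ one finds $\nabla^{2}P(\bar x)=B\bigl(\nabla^{2}V(\bar\xi)-K\bigr)B^{T}$ and $\nabla^{2}P^{d}(\bar\sigma)=K^{-1}-[\nabla^{2}V(\bar\xi)]^{-1}$, so
\[
\nabla^{2}P(\bar x)\succeq0\ \iff\ \nabla^{2}V(\bar\xi)\succeq K\ \iff\ [\nabla^{2}V(\bar\xi)]^{-1}\preceq K^{-1}\ \iff\ \nabla^{2}P^{d}(\bar\sigma)\succeq0,
\]
the middle equivalence being the order--reversal of matrix inversion on $\calS_{++}^{n}$. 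This is exactly where the hypothesis $n=m$ is used: it makes $B$ square, so the two semidefiniteness conditions transport into one another, and the argument genuinely fails when $n\ne m$.

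The step I expect to be the main obstacle is not this second--order algebra but the global and boundary analysis. In the converse of statement~1 one needs a maximizer of $P^{d}$ to exist over the \emph{non--convex} set $\calS_{a}^{+}$, and the completion--of--squares identity must be handled with care near the set where $G(\sigma)$ is singular, where $x(\sigma)$ and $P^{d}$ may be unbounded; a coercivity/closedness argument on $\calS_{a}^{+}$ is really needed here. For statements~2 and~3 the delicate point is the familiar gap between the above second--order conditions and true local optimality; I would close it under the stated non--degeneracy assumptions, and otherwise read the ``only if'' directions in the second--order sense.
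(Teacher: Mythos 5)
The paper does not prove this theorem: it is quoted verbatim as background, with the proof deferred to the cited reference \cite{gao-jogo00}, so there is no in-paper argument to compare yours against. Judged on its own, your proposal reconstructs what is essentially the standard proof from that literature: the completion-of-squares identity $\Xi(x,\sigma)=\tfrac12(x-x(\sigma))^{T}G(\sigma)(x-x(\sigma))+P^{d}(\sigma)$ together with $P(x)=\max_{\sigma}\Xi(x,\sigma)$ is exactly how the global-optimum part is obtained, and the Schur-complement analysis of the Hessian $\bigl(\begin{smallmatrix}G(\bar\sigma)&B\\ B^{T}&-\nabla^{2}V^{*}(\bar\sigma)\end{smallmatrix}\bigr)$, with the observation that $n=m$ makes $B$ square so that definiteness transports through $B(\cdot)B^{T}$ and matrix inversion, is precisely the mechanism behind the double-max and double-min statements. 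Your algebra checks out ($\nabla^{2}P(\bar x)=G(\bar\sigma)+B[\nabla^{2}V^{*}(\bar\sigma)]^{-1}B^{T}$, $\nabla^{2}P^{d}(\bar\sigma)=-\nabla^{2}V^{*}(\bar\sigma)-B^{T}G(\bar\sigma)^{-1}B$, $K^{-1}=-B^{T}G(\bar\sigma)^{-1}B$).

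The gaps you flag at the end are the genuine ones, and you should not expect to close all of them, because they are limitations of the theorem as classically stated rather than of your argument: uniqueness of the global minimizer fails when $G(\bar\sigma)$ is singular (your ``limiting argument'' does not repair this in general); the converse of statement 1 needs existence of a maximizer of $P^{d}$ on $\calS_a^{+}$, which requires an extra coercivity or nonemptiness assumption; and passing from the second-order equivalences to actual local optimality in statements 2--3 needs the nondegeneracy you impose. One small additional caution: the paper defines $V^{*}$ via stationarity (\emph{sta}) rather than a supremum, so your use of $\max_{\sigma}\Xi(x,\sigma)=P(x)$ implicitly invokes the convexity of $V$ to identify the Legendre and Fenchel conjugates; that is consistent with the paper's standing assumptions but worth saying explicitly.
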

The result reported in equation (\ref{eq: optcon}) clearly shows the global optimality conditions. The original non-convex primal problem is reduced to the maximization of the dual function $P^d(\sigma)$ on the convex set $\calS_a^+$. Furthermore it easy to notice from the (\ref{eq: dual}) that the dual is concave on $\calS_a^+$, therefore the resulting problem is convex. 

The other two results in Theorem \ref{th: triality} are the conditions for two particular stationary points in the primal, that is the local maximum with the highest value of the objective function and the local minimum with the highest value of the objective function among the stationary points. Finding these stationary points can have several application in physics and chemistry \cite{GaoBook 2000}, however such issues are outside the scope of this paper.

\begin{remark}
The Total Complementarity Principle (Theorem \ref{th: tcp}) states that $\bar{x}$ is a stationary point of the primal if and only if there exists a corresponding stationary point $\bar\sigma$ in the dual and there is no duality gap between the primal and dual functions in these two points. Therefore the absence of duality gap implies the stationarity of the primal-dual solution and viceversa.

\noindent
A similar result is also given in linear and convex optimization with the Strong Duality Theorem. As a matter of facts, strong duality states that if there exists a couple of primal and dual variables $(\tilde{x}, \tilde\sigma)$ such that there is no duality gap between them, then $\tilde{x}$ is the (global minimum) solution of the primal problem and $\tilde\sigma$ is the (global maximum) solution of the dual problem. In other words Theorem \ref{th: tcp} can be considered the generalization of the Strong Duality Principle in convex optimization.
The main difference between the results of these two theorems is that in non-convex optimization stationarity does not correspond to global optimality, because of the presence of several stationary points that could be local minima, local maxima or even saddle points.

\noindent
It is also important to underline that Weak Duality does not hold on the entire primal-dual feasible set. As a matter of facts it is easy to notice that if $\bar{x}$ is the global minimum, the value of the primal function in $\bar{x}$ is smaller than the value of the dual function in any point $\hat\sigma$ that corresponds to a local stationary point in the primal. 
\noindent
However, with the results reported in Theorem \ref{th: triality}, it is possible to see that Weak Duality holds in $\calS_a^+$, that is:
$$
P(x)\ge ÿP^d(\sigma),\quad \forall x\in\real^n,\sigma\in\calS_a^+.
$$ 
The fact that both Strong and Weak Duality hold in $\calS_a^+$ naturally gives the global optimality conditions for the primal problem. Furthermore it is important to underline that Strong Duality holds on the entire feasible set, while Weak Duality holds only in a subset. 
This is probably due to the non-convexities of the original problem.
\end{remark}

\section{Potential Reduction Algorithm for Canonical Duality}
\subsection{Reformulation of the Problem as a System of Constrained Equations}
By the results of Theorem \ref{th: triality} it is possible to find the global solution of Problem $({\cal P})$ by different approaches. One approach is to directly solve the dual formulation on $\calS_a^+$, but this method has several faults:
\begin{itemize}
\item It is necessary to calculate the inverse of matrix $G(\sigma)$ every time  the objective function is evaluated, and such operation could be necessary several times per iteration;
\item the inverse matrix operation can become even more time expensive or generate errors in the case $G(\sigma)$ is ill-conditioned or it is not full rank;
\item if the algorithm that solves the dual problem fails to converge to a good enough approximation of a stationary point, it is difficult to retrieve informations on the corresponding point in the primal problem.
\end{itemize}
For these reasons we propose a method that exploits the information available on both the primal and dual problems and search for a saddle point of the total complementarity function in $\calS_a^+$,  that is exactly the problem in the form of (\ref{eq: intsad}). 

As we said in the introduction, our approach consists in solving the following canonical saddle point problem:
\begin{equation}\label{eq: xigen}
\min_{x\in\real^n}\max_{\sigma\in\real^m}\Xi(x,\sigma)= \frac{1}{2}x^TG(\sigma)x-F(\sigma)^Tx-V^*(\sigma)\quad \mbox{s.t.}, \quad  G(\sigma)\succeq 0,
\end{equation}
by reformulating it as the problem of finding the solution of a monotone variational inequality  on a convex set \cite{FaP 03}:
\begin{equation}\label{eq: VIgen}
\Gamma(x,\sigma)=0,\quad G(\sigma)\succeq 0,
\end{equation}
where $\Gamma:\real^{n+m}\rightarrow \real^{n+m}$ is defined as:
$$
\Gamma(x,\sigma)=\left(
\begin{array}{c}
\nabla_x \Xi(x,\sigma) \\
-\nabla_\sigma \Xi(x,\sigma) \\
\end{array}\right).
$$
The operator $\Gamma$ is monotone because $\Xi(x,\sigma)$ is convex in the primal variables for $\sigma\in\calS_a^+$ and it is concave for all $\sigma \in \calS_a$ \cite{rock70}, while the set of positive definite matrices is a convex cone.
We want to find a solution of (\ref{eq: VIgen}) by solving the Karush-Kunt-Tucker (KKT) conditions associated with the problem, that is:
\begin{equation}\label{eq: mkkt}
\begin{array}{c}
\Gamma_L(x,\sigma,L)=
\left(\begin{array}{c}
\nabla_x\Xi(x,\sigma)\vspace{0.5em}\\
-\nabla_\sigma \Xi(x,\sigma)-\nabla_\sigma (L\bullet G(\sigma))
\end{array}\right)={\bold 0}_{n+m}\vspace{0.5em}\\
L\bullet G(\sigma)=0, \quad L\succeq0, \quad G(\sigma)\succeq 0, 
\end{array}
\end{equation}
Where $L\in \calS_+^n$ is the matrix of the Lagrangian multipliers. Problems can arise when searching for the solution of (\ref{eq: VIgen}) when there are KKT points located on the boundary of the feasible set. As a matter of facts, a point satisfying conditions (\ref{eq: mkkt}) with $L \neq 0$ does not correspond to a saddle point of the total complementarity function $\Xi(x,\sigma)$. In other words we are interested in KKT points which matrix of multipliers $L$ is equal to ${\bold 0}_{n\times n}$. 

To this aim, we reformulate the conditions (\ref{eq: mkkt})  as a system of Constrained Equations (CE)
and propose an interior point method specifically designed to solve this system of Constrained Equations and send the matrix of Lagrange multipliers to zero. We introduce the matrix $W\in \calS_+^n$ of slack variables and consider the $CE(H,\Omega)$ system:
\begin{equation}\label{eq: CE}
H(z)={\bf 0},\quad z=(x,\sigma,L,W)\in \Omega
\end{equation}
Where $H:\Omega\rightarrow \calS$ with $\Omega= \real^{n+m}\times\calS_+^n\times\calS_+^n$ and $\calS= \real^{n+m}\times\calS_+^n\times\calS_+^n\times\calS_+^n$, is defined as
\begin{equation}\label{eq: operatorl}
H(x,\sigma,L,W)=
\left(\begin{array}{c}
\Gamma_L(x,\sigma,L)\\
\Phi(\sigma,L,W)\\
L\\
\end{array}\right)\vspace{0.5em}\\
\end{equation}
with  $\Phi(\sigma,L,W)$ defined as:
$$
\Phi(\sigma,L,W)=\left(\begin{array}{c}
W-G(\sigma)\\
(LW+WL)/2\\
\end{array}\right).
$$
The last set of equations in (\ref{eq: operatorl}), forces the matrix of Lagrange multipliers to go to zero when the algorithm reaches convergence, assuring that the solution of $CE(\Omega,H)$ is a saddle point of (\ref{eq: xigen}).

\subsection{Key Assumptions and Convergence Result}
In this section we present the conditions which the operator $H$ and the feasible set $\Omega$ must satisfy together with a suitable potential reduction function in order to assure the convergence to a solution of the (\ref{eq: CE}). The framework we use is the same as the one presented in \cite{FaP 03} and \cite{MP99}. This framework is based on six main assumptions that we report here for convenience.

Given the set $\Omega$, operator $H$ and a potential function $p: \mbox{int } \calS\rightarrow \real$, the following assumptions must be satisfied by a potential reduction method in order to assure convergence to a solution of the $CE(\Omega,H)$. \smallskip\\
{\bf (A1)} the closed set $\Omega$ has a nonempty interior.\smallskip\\ 
{\bf (A2)} there exists a closed set $\calS\subseteq \real^{n+m}\times\calS_+^n\times\calS_+^n\times\calS_+^n$ such that
	\begin{enumerate}
	\item $\bf 0 \in \calS$;
	\item the open set $\Omega_I= H^{-1}(\mbox{int } \calS)\cap \mbox{int } \Omega$ is nonempty;
	\item the set $ H^{-1}(\mbox{int } \calS)\cap \mbox{bd } \Omega$ is empty
	\end{enumerate}
{\bf (A3)} $H$ is continuously differentiable on $\Omega_I$, and $JH(x)$ is full rank for all $x \in \Omega_I$\smallskip\\
{\bf (A4)} for every sequence $\{ u^k \}\subset \mbox{int } \calS$ such that:
$$
\begin{array}{cccc}
 \mbox{either }& \displaystyle \lim_{k\rightarrow \infty} \| u^k \|=\infty & \mbox{or} & \displaystyle  \lim_{k\rightarrow \infty} u^k=\bar{u} \in  \mbox{bd } \calS \backslash \{0\}
\end{array}
$$
we have:
$$
\lim_{k\rightarrow \infty} p(u^k)=\infty.
$$
{\bf (A5)} $p$ is continuously differentiable in its domain and $u\bullet\nabla p(u)>0$ for all nonzero $u \in \mbox{int } \calS$.\smallskip\\
{\bf (A6)} there exists a nonzero vector $o\in \calS$ and a scalar $\bar\beta\in (0,1]$ such that:
$$
u \bullet \nabla p(u)\ge \bar\beta\frac{(o\bullet u)(o\bullet \nabla p(u))}{\|o \|^2}, \quad \forall u\in \mbox{int } \calS.
$$ 
In the following theorems we show that operator $H$ and the feasible set $\Omega$ satisfy the aforementioned assumptions with the choice of a suitable potential reduction function.

\begin{theorem}
suppose that $V( \Lambda(x))$ is differentiable in $x$ and that $V^*(\sigma)$ is twice differentiable in $\sigma$, then the set $\Omega$ and the operator $H$ in (\ref{eq: operatorl}) satisfy conditions (A1)-(A3).
\end{theorem}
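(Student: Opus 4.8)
The plan is to verify (A1), (A2), (A3) in that order; (A1) and (A2) are essentially bookkeeping, and all the real work sits in the rank part of (A3). For (A1) I would simply note that $\Omega=\real^{n+m}\times\calS_+^n\times\calS_+^n$ is a product of closed sets, hence closed, and that $\mbox{int }\Omega=\real^{n+m}\times\calS_{++}^n\times\calS_{++}^n$ is nonempty, e.g.\ it contains $({\bf 0}_n,{\bf 0}_m,I_n,I_n)$.

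For (A2) I would take $\calS=\real^{n+m}\times\calS_+^n\times\calS_+^n\times\calS_+^n$, the codomain of $H$, so that $\mbox{int }\calS=\real^{n+m}\times\calS_{++}^n\times\calS_{++}^n\times\calS_{++}^n$, and check the three items. (i) ${\bf 0}\in\calS$ because ${\bf 0}_{n\times n}\in\calS_+^n$. (ii) To see $\Omega_I\neq\emptyset$ I would exhibit $z_0=({\bf 0}_n,{\bf 0}_m,I_n,cI_n)$ with $c$ so large that $cI_n-A\succ0$: then $L=I_n\succ0$ and $W=cI_n\succ0$ give $z_0\in\mbox{int }\Omega$, and $H(z_0)=\big(\Gamma_L(z_0),\,cI_n-A,\,cI_n,\,I_n\big)$ has its last three blocks positive definite while the first lies in $\real^{n+m}$, so $H(z_0)\in\mbox{int }\calS$ and $z_0\in\Omega_I$. (iii) A point $z=(x,\sigma,L,W)\in\mbox{bd }\Omega$ has $L$ or $W$ positive semidefinite but singular; if $L$ is singular the last block of $H(z)$, which equals $L$, is not in $\calS_{++}^n$; if $W$ is singular and $Wv={\bf 0}$ with $v\neq{\bf 0}$, then $v\trt W={\bf 0}$ and the complementarity block satisfies $v\trt\big((LW+WL)/2\big)v=0$, so it is not positive definite. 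Either way $H(z)\notin\mbox{int }\calS$, so $H^{-1}(\mbox{int }\calS)\cap\mbox{bd }\Omega=\emptyset$.

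For (A3), continuous differentiability is immediate: the blocks $W-G(\sigma)$, $(LW+WL)/2$ and $L$ are polynomial in $(x,\sigma,L,W)$, the block $\nabla_x\Xi(x,\sigma)=G(\sigma)x-F(\sigma)$ is polynomial, and the $\sigma$-block of $\Gamma_L$ only contributes $\nabla^2V^*(\sigma)$ to the Jacobian, so the hypotheses ($V^*$ twice differentiable, $V(\Lambda(\cdot))$ differentiable) give $H\in C^1$ on $\Omega\supseteq\Omega_I$. For the rank condition --- noting that $H$ maps into a space of strictly larger dimension than its domain, so ``full rank'' means injectivity of $JH(z)$ --- I would fix $z\in\Omega_I$, take $(\delta x,\delta\sigma,\delta L,\delta W)\in\ker JH(z)$, and peel it apart from the bottom: the trivial block $L$ gives $\delta L={\bf 0}$; with $\delta L={\bf 0}$ the complementarity block gives the Lyapunov equation $L\delta W+\delta W L={\bf 0}$, and since $z\in\mbox{int }\Omega$ forces $L\succ0$, the operator $X\mapsto LX+XL$ is positive definite on $\calS^n$, so $\delta W={\bf 0}$; the block $W-G(\sigma)$ then gives $\sum_{k=1}^mC_k\delta\sigma_k=\delta W={\bf 0}$; and the two $\Gamma_L$ rows collapse to $G(\sigma)\delta x+M(x)\delta\sigma={\bf 0}$ and $-M(x)\trt\delta x+\nabla^2V^*(\sigma)\delta\sigma={\bf 0}$ with $M(x)=[\,C_1x-b_1\;\cdots\;C_mx-b_m\,]$. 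Pairing these with $\delta x$ and $\delta\sigma$ and cancelling the antisymmetric cross term leaves $\delta x\trt G(\sigma)\delta x+\delta\sigma\trt\nabla^2V^*(\sigma)\delta\sigma=0$, and I would conclude $\delta x=\delta\sigma={\bf 0}$.

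The hard part is exactly this final inference. It is the one place where one genuinely uses that membership in $\Omega_I$ keeps $G(\sigma)$ positive definite --- so $\delta x\trt G(\sigma)\delta x\ge0$ with equality only if $\delta x={\bf 0}$ --- and that $V^*$ (equivalently $V$) is convex --- so $\delta\sigma\trt\nabla^2V^*(\sigma)\delta\sigma\ge0$; I would therefore check carefully that the sign convention in $\Phi$ and the precise choice of $\calS$ indeed force $G(\sigma)\succ0$ on $\Omega_I$. If $\nabla^2V^*(\sigma)$ is only positive semidefinite, ruling out a nonzero $\delta\sigma$ requires in addition a mild non-degeneracy of the canonical operator $\Lambda$ --- no nonzero $\delta\sigma$ with $\sum_kC_k\delta\sigma_k={\bf 0}$, $\sum_kb_k\delta\sigma_k={\bf 0}$ and $\nabla^2V^*(\sigma)\delta\sigma={\bf 0}$, which is automatic if $\mathcal C:\,v\mapsto\sum_kv_kC_k$ is injective or if $V^*$ is strictly convex. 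Everything else is routine once this positivity/non-degeneracy point is settled.
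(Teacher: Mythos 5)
Your handling of (A1) and (A2) is correct and in places more careful than the paper's own. The paper certifies (A2).2 by asserting that $({\bf 0}_{n+m},I_n,I_n)$ lies in $\Omega_I$, which silently requires $I_n-G({\bf 0}_m)=I_n-A\succ 0$; your witness $W=cI_n$ with $c$ large enough repairs exactly this. For (A2).3 the paper cites Lemma 1 of Monteiro and Pang on the set $\{(L,W): LW+WL\in\calS^n_{++}\}$, whereas you argue directly that a singular $W$ with $Wv={\bf 0}$ forces $v\trt\bigl((LW+WL)/2\bigr)v=0$ and a singular $L$ violates the last block of $H$; both routes work and yours is self-contained. Your kernel-peeling for (A3) --- $\delta L={\bf 0}$ from the last block, $\delta W={\bf 0}$ from the Lyapunov operator $X\mapsto LX+XL$ being positive definite when $L\succ 0$, $\sum_k C_k\delta\sigma_k={\bf 0}$ from the slack block, and the antisymmetric cancellation of the cross terms --- is also sound as far as it goes.

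The gap sits exactly where you flagged it, and the check you propose comes out negative. Membership in $\Omega_I$ only requires $H(z)\in\mbox{int }\calS$, i.e.\ $W-G(\sigma)\succ 0$, $(LW+WL)/2\succ 0$ and $L\succ 0$; the first of these gives $G(\sigma)\prec W$ and nothing more, so $G(\sigma)$ may well be indefinite on $\Omega_I$ (the condition $G(\sigma)\succeq 0$ is what the iteration is being driven \emph{towards}, it is not enforced along the way). Hence the identity $\delta x\trt G(\sigma)\delta x+\delta\sigma\trt\nabla^2V^*(\sigma)\delta\sigma=0$ has a possibly negative first term and does not yield $\delta x={\bf 0}$, even if $V^*$ is strictly convex; your argument stalls at the last step, and the unused relation $\sum_k C_k\delta\sigma_k={\bf 0}$ does not obviously rescue it. You should know, however, that the paper's own proof of (A3) is the single sentence that it ``is satisfied because of the assumption on $V^*(\sigma)$ and $\nabla V(\Lambda(x))$'', which justifies continuous differentiability but says nothing about why $JH$ is injective on $\Omega_I$. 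So you have not missed an argument that the paper supplies; rather, you have located a genuine point that the paper glosses over, and closing it would require either additional hypotheses (such as the non-degeneracy of $\{C_k\}$ or of $\nabla^2 V^*$ that you mention, together with some control on the negative part of $G(\sigma)$) or a different argument entirely.
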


\begin{proof}
Condition $(A1)$ is trivially satisfied, also condition $(A2).1$ holds. The point ${(\bold 0}_{n+m},I_n,I_n)$ belongs to both $\Omega_I$ and int $\Omega$, therefore condition $(A2).2$ holds. From condition 
$$
(LW+WL)/2
$$
we can define the following set:
$$
{\cal U}=\{(L,W)\in \calS_{++}^n\times \calS_{++}^n: LW+WL\in \calS_{++}^n\}
$$
it has been proved in lemma 1 of \cite{MP98} that
$$
{\cal U}=\{(L,W)\in \calS_{+}^n\times \calS_{+}^n: LW+WL\in \calS_{++}^n\}.
$$
This alternative representation implies the $(A2).3$. Finally condition $(A3)$ is satisfied because of the assumption on $V^*(\sigma)$ and $\nabla V (\Lambda(x))$.
\qed
\end{proof}

\begin{theorem}
the potential function $p: \calS\rightarrow \real$ defined as:
\begin{equation}\label{eq: genpot}
\begin{array}{cl}
p(a,B,C,D)=& \eta \log(\|a\|^2+\| B \|_F^2+\|C \|_F^2+\|D \|_F^2)\vspace{0.5em}-\\
&\displaystyle\log(\det(B))-\log(\det(C))-\log(\det(D)),
\end{array}
\end{equation}
where $\eta\ge {2}n$, satisfies assumptions (A4)-(A6),  with $o=({\bf 0}_n,I_n,{\bf 0}_{n\times n},{\bf 0}_{n\times n})$ and $\bar\beta<1/3$
\end{theorem}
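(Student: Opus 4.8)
The plan is to compute $\nabla p$ once, read off (A5) and (A6) from it, and settle (A4) by elementary estimates on $\det$ and $\log$. Write $u=(a,B,C,D)\in\mbox{int }\calS=\real^{n+m}\times\calS_{++}^n\times\calS_{++}^n\times\calS_{++}^n$ and $g(u)=\|a\|^2+\|B\|_F^2+\|C\|_F^2+\|D\|_F^2=\|u\|^2$. Observe first that $p$ is $C^\infty$ on $\mbox{int }\calS$: $\log\det$ is smooth on $\calS_{++}^n$, and $\log g(u)$ is smooth because $g(u)>0$ there (since $B\succ0$ already forces $\|B\|_F>0$). Using $\nabla_B\|B\|_F^2=2B$, $\nabla_B(-\log\det B)=-B^{-1}$ (and the analogues in $a,C,D$), one gets
$$\nabla_a p=\frac{2\eta}{g}\,a,\qquad \nabla_B p=\frac{2\eta}{g}\,B-B^{-1},$$
and likewise for $C,D$. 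Since $B\bullet B^{-1}=\mathrm{tr}(I_n)=n$, the $\bullet$-product with $u$ collapses to the constant
$$u\bullet\nabla p(u)=\frac{2\eta}{g}\bigl(\|a\|^2+\|B\|_F^2+\|C\|_F^2+\|D\|_F^2\bigr)-3n=2\eta-3n\ \ge\ n\ >\ 0$$
because $\eta\ge2n$; together with the differentiability just noted, this is (A5).

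For (A6) take $o=({\bf 0}_{n+m},I_n,{\bf 0}_{n\times n},{\bf 0}_{n\times n})\in\calS$. Then $\|o\|^2=\|I_n\|_F^2=n$, $o\bullet u=I_n\bullet B=\mathrm{tr}(B)$, and $o\bullet\nabla p(u)=\tfrac{2\eta}{g}\mathrm{tr}(B)-\mathrm{tr}(B^{-1})$, so with $t:=\mathrm{tr}(B)>0$ and $s:=\mathrm{tr}(B^{-1})>0$ the right-hand side of the (A6) inequality equals $\tfrac{\bar\beta}{n}\bigl(\tfrac{2\eta t^2}{g}-ts\bigr)$. If this quantity is nonpositive the inequality holds trivially, since its left-hand side $2\eta-3n$ is positive. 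Otherwise I would use the two eigenvalue estimates $t^2=\bigl(\sum_i\lambda_i(B)\bigr)^2\le n\sum_i\lambda_i(B)^2=n\|B\|_F^2\le n\,g$ and $ts=\bigl(\sum_i\lambda_i(B)\bigr)\bigl(\sum_i\lambda_i(B)^{-1}\bigr)\ge n^2$, yielding $\tfrac{2\eta t^2}{g}-ts\le2\eta n-n^2$ and hence
$$\bar\beta\,\frac{(o\bullet u)(o\bullet\nabla p(u))}{\|o\|^2}\ \le\ \bar\beta\,(2\eta-n).$$
It then suffices that $\bar\beta(2\eta-n)\le2\eta-3n$, which follows from $\bar\beta<\tfrac13$ and $\eta\ge2n$ since $\bar\beta(2\eta-n)<\tfrac13(2\eta-n)\le2\eta-3n$ (the last step is equivalent to $\eta\ge2n$). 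This gives (A6).

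For (A4) let $\{u^k\}\subset\mbox{int }\calS$. If $\|u^k\|\to\infty$ then $g(u^k)=\|u^k\|^2\to\infty$, while AM--GM combined with $\mathrm{tr}(B)\le\sqrt n\,\|B\|_F$ gives $\det B\le(\|B\|_F/\sqrt n)^n$, hence $-\log\det B\ge-n\log\|B\|_F+\tfrac n2\log n\ge-n\log\|u^k\|+\tfrac n2\log n$ and similarly for $C^k,D^k$; therefore for $k$ large
$$p(u^k)\ \ge\ (2\eta-3n)\log\|u^k\|+\tfrac{3n}{2}\log n\ \longrightarrow\ +\infty,$$
since $2\eta-3n\ge n>0$. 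If instead $u^k\to\bar u\in\mbox{bd }\calS\setminus\{0\}$, then at least one of $\bar B,\bar C,\bar D$ is positive semidefinite and singular; say $\det\bar B=0$, so $\det B^k\to0^+$ and $-\log\det B^k\to+\infty$, whereas $\eta\log g(u^k)\to\eta\log\|\bar u\|^2$ is finite (this uses $\bar u\neq0$) and $-\log\det C^k,-\log\det D^k$ stay bounded below because $\det C^k,\det D^k$ remain bounded above along the convergent sequence; thus $p(u^k)\to+\infty$. This establishes (A4). The only genuinely delicate point is the bookkeeping in (A6): one must see that the fixed quantity $2\eta-3n$ produced by $u\bullet\nabla p(u)$ is exactly large enough to absorb $\bar\beta(2\eta-n)$ under the hypotheses $\eta\ge2n$, $\bar\beta<1/3$; everything else reduces to standard matrix inequalities.
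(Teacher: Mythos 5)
Your proof is correct and takes essentially the same route as the paper's: the same gradient computation yielding the constant $u\bullet\nabla p(u)=2\eta-3n$ for (A5), the same trace inequalities $(\mathrm{tr}\,B)^2\le n\|B\|_F^2$ and $\mathrm{tr}(B)\,\mathrm{tr}(B^{-1})\ge n^2$ combined with $\bar\beta<1/3$, $\eta\ge 2n$ for (A6), and eigenvalue/AM--GM estimates for (A4). If anything, you are more explicit than the paper on the two cases of (A4) (divergence of $\|u^k\|$ versus convergence to a singular boundary point) and on the sign case in (A6), both of which the paper handles only implicitly.
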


\begin{proof}
It can be easily noticed that the value of $p$ goes to $\infty$ as the sequence $\{a_k,B_k,C_k,D_k\}$ approaches the boundary of the feasible set. Considering that $\|Z \|_F=\sqrt{tr(Z^TZ)}$, then $\|Z \|_F^2$ is the sum of the squares of the $n$ eigenvalues of $Z$ and that $\det(Z)$ is the product of said eigenvalues, we have:
$$
\begin{array}{cl}
p(a,B,C,D)=&\displaystyle\eta \log\left(\sum_{i=1}^{n+m}\|a\|^2+\sum_{i=1}^{n} b_i^2 +\sum_{i=1}^{n}c_i^2+\sum_{i=1}^{n}d_i^2 \right)-\vspace{0.5em}\\
&\displaystyle\sum_{i=1}^{n}\log b_i-\sum_{i=1}^{n}\log c_i-\sum_{i=1}^{n}\log d_i
\end{array}
$$
Where $b_i=1,\dots,n$, $c_i=1,\dots,n$ and $d_i=1,\dots,n$ are the eigenvalues of $B$, $C$ and $D$ respectively. Also considering that $n \log \left(\sum_{i=1}^n u_i\right)\ge \sum_{i=1}^n \log u_i + n\log n$ it is possible to write:
$$
p(a,B,C,D)> \left(\frac{2 \eta}{3n}-1 \right)\left(\sum_{i=1}^n \log b_i+\sum_{i=1}^n \log c_i+\sum_{i=1}^n \log d_i  \right),
$$
therefore assumption (A4) is satisfied for $\eta> \frac{3}{2}n$.
If we define:
$$
\tau={\|a\|^2+\| B \|_F^2+\|C \|_F^2+\|D \|_F^2},
$$ 
it is possible to write the derivative of the potential function p as:
$$
\nabla p(a,B,C,D)=\left(
\begin{array}{c}
\displaystyle \frac{2\eta}{\tau} a \vspace{0.5 em}\\
\displaystyle \frac{2\eta}{\tau} B -B^{-1} \vspace{0.5 em}\\
\displaystyle \frac{2\eta}{\tau} C-C^{-1}\vspace{0.5 em}\\
\displaystyle \frac{2\eta}{\tau} D-D^{-1}\vspace{0.5 em}\\
\end{array}\right),
$$
we have 
$$
(a,B,C,D)\bullet \nabla p(a,B,C,D)=2\eta-3n>0,
$$
and thus Assumption (A5) holds.
For Assumption (A6), considering that $(tr Z)^2\le n \|Z \|_F^2$ and $n^2\le(trZ^{-1})(tr Z)$ we have:
$$
\begin{array}{c}
\displaystyle\frac{[\nabla p(a,B,C,D)\bullet ({\bf 0}_n,I_n,{\bf 0}_{n\times n},{\bf 0}_{n\times n})][(a,B,C,D)\bullet ({\bf 0}_n,I_n,{\bf 0}_{n\times n},{\bf 0}_{n\times n})]} {\|({\bf 0}_n,I_n,{\bf 0}_{n\times n},{\bf 0}_{n\times n}) \|^2_F} =\vspace{0.5 em}\\
\displaystyle\frac{2\eta}{n}\frac{tr (B)^2}{\tau}-\frac{tr (B^{-1})tr( B )}{n}\le \vspace{0.5 em}\\
\displaystyle \frac{2\eta}{n} \frac{tr(B)^2}{\| B\|_F^2}-\frac{tr (B^{-1})tr (B )}{n}\le \vspace{0.5 em}\\
\displaystyle 2 \eta -n<  \frac{1}{\bar\beta}(2 \eta-3 n)=\frac{1}{\bar\beta}[(a,B,C,D)\bullet \nabla p(a,B,C,D)].
\end{array}
$$
\qed
\end{proof}
We let:
$$
z=(x,\sigma, L, W),\quad\psi(z)=p(H(z)),
$$
and report the following method that follows the same scheme of the interior-point method presented in \cite{MP99}: 

\begin{algo}{CPRA: Complementarity Potential Reduction Algorithm}{} \label{alg: CPRA}
{\tt (S.0):} 
Choose $ z^0=(x^0,\sigma^0, L^0, W^0) \in  \Omega $, $ \delta_0 > 0, \gamma \in (0,1), \bar\beta<1/3, \epsilon>0$,
      and set $ k := 0 $.\\[0.8em]
   {\tt (S.1):} If $\|\Gamma(x,\sigma)\|^2<\epsilon$: STOP\\[0.8em] 
   {\tt (S.2):} Choose a scalar $\beta_k\in (0,\bar\beta)$ and find a solution $d^k=(dx^k,d\sigma^k,dL^k,dW^k)$ of the following linear least squares problem:
   $$
   \min_{d}\left\{ \frac{1}{2}\left\|Q(z^k,d)+ H(z^k)-\beta_k\frac{o^T H(z^k)}{\|o \|^2}o\right\|^2\right\}.
   $$
where:
   
   $$
   Q(z^k,d)=\left(\begin{array}{c}
   \nabla_{xx}^2\Xi(x^k,\sigma^k)dx+\nabla^2_{x\sigma}\Xi(x^k,\sigma^k)d\sigma\\
   -\nabla^2_{x\sigma}\Xi(x^k,\sigma^k)dx-\nabla^2_{\sigma\sigma}\Xi(x^k,\sigma^k)d\sigma+\nabla_{\sigma L } (L^k\bullet G(\sigma^k))dL\\
   dW-G(d\sigma)\\
   (dL)W^k+W^k(dL)+L^k(dW)+(dW)L^k\\
   dL
   \end{array}\right)
   $$
   {\tt (S.3):} find a step size  $\alpha_k$ such that
   $$
   z^k+\alpha_k d^k \in \Omega
   $$  
   and
   $$
\begin{array}{ll}
   \psi(z^k+\alpha_kd^k)&\le \psi(z^k)+\gamma\nabla\psi(z^k) \bullet d^k
   \end{array}
   $$
   \\[0.8em]
   {\tt (S.4):} Set $z^{k+1} = z^k+\alpha_kd^k$, $k \leftarrow k+1 $, and go to (S.1).
\end{algo}
Algorithm \ref{alg: CPRA} is a modified, damped version of the Newton method. At Step {\tt (S.0)} the initial values of the variables and parameters are set. In order to assure the feasibility of $z^0$, it generally suffices to put a large enough positive value of $\sigma^0$, such that $G(\sigma^0)\succ0$. 
At Step {\tt (S.1)} there is the stopping criterion that assures the the final point is a good enough approximation of a stationary point of $\Xi(x,\sigma)$. 
At Step {\tt (S.2)} the modified newton direction is calculated. As the linear system is not squared, the least-squares solution to the system of equations is returned.
One of the main features of the algorithm is the presence of the vector $o$ that bends the direction toward the interior of the feasible set. 
It is important to underline that the calculated direction at every iteration is unique for Assumption $(A3)$ and always a descent direction of $\psi(\cdot)$ in $z_k$ as shown in the following theorem:
\begin{theorem}
Suppose that conditions $(A5)$ and $(A6)$ hold. Assume also that $z\in \Omega_{I}$, $d^k=(dx^k,d\sigma^k,dL^k,dW^k)\in\real^{n+m}\times\calS_+^n\times\calS_+^n$ and $\beta\in \real$ are such that
\begin{equation}\label{eq: minsq}
\begin{array}{ll}
H(z)\neq 0,  \qquad 0\le \beta < \bar\beta,\\
d^k= arg\min_d\left\{ \frac{1}{2}\left\|Q(z,d)+ H(z)-\beta_k\frac{o^T H(z)}{\|o \|^2}o\right\|^2\right\}, 
\end{array}
\end{equation}
Where $o \in \calS$ and $\bar\beta \in [0,1]$ are as in condition $(A6)$. Then $d^k$ is a descent direction for  $\psi(\cdot)$ in $z$, that is $\nabla\psi(z)\bullet d^k<0$
\end{theorem}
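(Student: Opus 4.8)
The plan is to reduce the statement, by the chain rule, to an inequality involving the linearized operator $Q(z,\cdot)$, the least-squares residual, and $\nabla p(H(z))$, and then to handle the dominant part using Assumptions (A5) and (A6).

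Write $u:=H(z)$. The map $d\mapsto Q(z,d)$ is the Jacobian map $JH(z)\,d$ (its blocks are the directional derivatives of the corresponding blocks of $H$), and $\psi(\cdot)=p(H(\cdot))$, so the chain rule gives $\nabla\psi(z)\bullet d^k=\nabla p(u)\bullet Q(z,d^k)$. Since $d^k$ minimizes the convex quadratic $\tfrac12\|Q(z,d)+u-\beta\tfrac{o\bullet u}{\|o\|^2}o\|^2$ over $d$, and $Q(z,\cdot)$ is linear, the normal equations state that the residual $r:=Q(z,d^k)+u-\beta\tfrac{o\bullet u}{\|o\|^2}o$ is orthogonal to the range of $Q(z,\cdot)$ (in particular $Q(z,d^k)\bullet r=0$). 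Substituting $Q(z,d^k)=-u+\beta\tfrac{o\bullet u}{\|o\|^2}o+r$ into the chain-rule identity gives
$$\nabla\psi(z)\bullet d^k=\Big[-\,u\bullet\nabla p(u)+\beta\,\tfrac{(o\bullet u)\,(o\bullet\nabla p(u))}{\|o\|^2}\Big]+\nabla p(u)\bullet r .$$

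For the bracketed term, note that $z\in\Omega_I$ forces $u=H(z)\in\mbox{int }\calS\setminus\{0\}$, so (A5) gives $u\bullet\nabla p(u)>0$, and (A6), applied with this $u$, the prescribed vector $o$, and $\bar\beta$, gives $\tfrac{(o\bullet u)(o\bullet\nabla p(u))}{\|o\|^2}\le\tfrac1{\bar\beta}\,u\bullet\nabla p(u)$. Multiplying by $\beta\ge0$ and using $\beta<\bar\beta$ together with positivity, one finds the bracket is at most $-\big(1-\tfrac\beta{\bar\beta}\big)\,u\bullet\nabla p(u)<0$; with the particular potential of the previous theorem it equals the fixed negative number $-(1-\beta/\bar\beta)(2\eta-3n)$.

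The delicate part is the term $\nabla p(u)\bullet r$. I would first test whether $\nabla p(H(z))$ lies in the range of $Q(z,\cdot)$: if so, $\nabla p(u)\bullet r=0$ and the proof is finished; if not, one must bound $|\nabla p(u)\bullet r|$ strictly below the magnitude of the negative bracketed term. This is where the concrete structure enters — the (near) block-triangular shape of $Q(z,\cdot)$, with its invertible ``core'' coming from $\nabla_{xx}^2\Xi(x,\sigma)=G(\sigma)\succ0$ and the Lyapunov-type map $dW\mapsto L\,dW+dW\,L$ (invertible because $L\succ0$ on $\Omega_I$), plus the appended block $dL$ — which identifies the orthogonal complement of the range; combined with the explicit form of $\nabla p$, the positivity facts valid on $\Omega_I$ ($L,\,W,\,W-G(\sigma)\succ0$) and the size restriction on $\eta$, this should give the required estimate. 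I expect this residual estimate to be the main obstacle; once it is in hand, adding it to the negative bracketed term yields $\nabla\psi(z)\bullet d^k<0$.
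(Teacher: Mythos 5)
Your decomposition is the right one and is, in substance, the same route the paper takes: the chain rule gives $\nabla\psi(z)\bullet d^k=\nabla p(u)\bullet Q(z,d^k)$, the normal equations let you write $Q(z,d^k)=-u+\beta\frac{o\bullet u}{\|o\|^2}o+r$ with $r$ the least-squares residual, and (A5)--(A6) correctly give that the bracketed part is at most $-(1-\beta/\bar\beta)\,u\bullet\nabla p(u)<0$. But the proof is not finished: the term $\nabla p(u)\bullet r$ is precisely the crux, and you leave it at ``test whether $\nabla p(H(z))$ lies in the range of $Q(z,\cdot)$; if not, bound it.'' Neither branch is carried out, and the second branch is not a routine estimate --- the negative part is a fixed constant of order $2\eta-3n$ while $\nabla p(u)\bullet r$ has no a priori smallness, so without an actual argument the conclusion $\nabla\psi(z)\bullet d^k<0$ does not follow. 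This is a genuine gap, not an omitted detail.

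It is worth knowing where the paper puts this brick. It vectorizes $H$ into $\hat H$, writes $\hat d^k=(Ju^TJu)^{-1}Ju^T\bigl(\beta\frac{\hat o^Tu}{\|\hat o\|^2}\hat o-u\bigr)$, and then uses the Moore--Penrose factorization $(Ju^TJu)^{-1}=Ju^{-1}(Ju^T)^{-1}$ to collapse $Ju(Ju^TJu)^{-1}Ju^T$ and pass directly to $\nabla\hat p(u)^T\bigl(\beta\frac{\hat o^Tu}{\|\hat o\|^2}\hat o-u\bigr)$. Since $J\hat H(z)$ is a strictly tall full-column-rank matrix (the codomain of $H$ has an extra $\calS^n$ block), $Ju(Ju^TJu)^{-1}Ju^T$ is the orthogonal projector $P$ onto the range of $Ju$, not the identity; that step is therefore exactly the assertion that your term $\nabla p(u)\bullet r=\nabla\hat p(u)^T(P-I)\bigl(\beta\frac{\hat o^Tu}{\|\hat o\|^2}\hat o-u\bigr)$ vanishes, i.e.\ that $\nabla\hat p(u)$ (equivalently the target vector) lies in the range of $J\hat H(z)$. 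The paper asserts this implicitly and without argument; you at least name the issue. So the two arguments share the same skeleton and the same missing step: to complete either, one must actually establish the range condition (or a quantitative bound on $\nabla p(u)\bullet r$), and this does not follow from (A1)--(A6) alone.
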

\begin{proof}
We introduce the following vector in $\real^{n+m+3n^2}$:
\begin{equation}
\hat{H}(z)=
\left(\begin{array}{c}
\Gamma_L(x,\sigma,L)\\
vect\{W-G(\sigma)\}\\
vect\{(LW+WL)/2\}\\
vect\{L\}\\
\end{array}\right).
\end{equation}
The Jacobian of $\hat{H}(z)$ is the following $(n+m+3n^2)\times(n+m+2n^2) $ matrix:
\begin{equation}
J\hat{H}(z)=
\left(\begin{array}{cccc}
\nabla_{xx}^2 \Xi(x,\sigma)&\nabla_{x\sigma}^2 \Xi(x,\sigma)&{\bf 0}_{n\times n^2}&{\bf 0}_{n\times n^2}\\
-\nabla_{x\sigma}^2 \Xi(x,\sigma)&\nabla_{\sigma\sigma}^2 \Xi(x,\sigma)&C^T&{\bf 0}_{m\times n^2}\\
{\bf 0}_{n^2\times n}&C&{\bf 0}_{n^2\times n^2}&I_{n^2}\\
{\bf 0}_{n^2\times n}&{\bf 0}_{n^2\times m}&W_{en}&L_{en}\\
{\bf 0}_{n^2\times n}&{\bf 0}_{n^2\times m}&I_{n^2}&{\bf 0}_{n^2\times n^2}
\end{array}\right).
\end{equation}
Where:
\begin{equation}
W_{en}=
\left(\begin{array}{cccc}
W+I_{n}w_{11}&I_{n}w_{12}&\cdots&I_{n}w_{1n}\\
I_{n}w_{21}&W+I_{n}w_{22}&\cdots&I_{n}w_{2n}\\
\vdots&\vdots&\ddots&\vdots\\
I_{n}w_{n1}&I_{n}w_{n2}&\cdots&W+I_{n}w_{nn}
\end{array}\right),
\end{equation}

\begin{equation}
L_{en}=
\left(\begin{array}{cccc}
L+I_{n}l_{11}&I_{n}l_{12}&\cdots&I_{n}l_{1n}\\
I_{n}l_{21}&L+I_{n}l_{22}&\cdots&I_{n}l_{2n}\\
\vdots&\vdots&\ddots&\vdots\\
I_{n}l_{n1}&I_{n}l_{n2}&\cdots&L+I_{n}l_{nn}
\end{array}\right),
\end{equation}
and $C\in\real^{n^2\times m}$ is $\nabla_{\sigma L} (L\bullet G(\sigma^k))^T$.
Let $u\equiv \hat{H}(z)$, if we consider 
$\hat{d}^k\in\real^{n+m+2n^2}$, solution of the following least squares problem:
\begin{equation}\label{eq: minsqx}
\hat{d}^k=arg\min_d\left\{ \frac{1}{2}\left\|(Ju)d+ u-\beta_k\frac{\hat{o}^T u}{\|\hat{o} \|^2}\hat{o}\right\|^2\right\}
\end{equation}
where $\hat{o}$ has been suitably changed from $o$ to match the dimension of $\hat{H}(z)$, it is easy to notice that $\hat{d}^k$ is equivalent to $d^k$, solution of the least squares problem in (\ref{eq: minsq}), in the following sense:
$$
\hat{d}^k=
\left(\begin{array}{c}
dx^k\\
d\sigma^k\\
vect\{dL^k\}\\
vect\{dW^k\}\\
\end{array}\right).
$$
Furthermore, if we define:
$$
\nabla \hat\psi(z)=
\left(\begin{array}{c}
\nabla_x \psi(z)\\
\nabla_\sigma\psi(z)\\
vect\{\nabla_L\psi(z)\}\\
vect\{\nabla_W\psi(z)\}\\
\end{array}\right), \quad
\nabla \hat{p}(u)=
\left(\begin{array}{c}
\nabla_x p(H(z))\\
\nabla_\sigma p(H(z))\\
vect\{\nabla_L p(H(z))\}\\
vect\{\nabla_W p(H(z))\}\\
\end{array}\right), 
$$
for the symmetry of the matrices involved in the calculations, we have:
$$
\nabla\psi(z^k)\bullet d^k=\nabla \hat\psi(z)^T\hat{d}^k , \quad \nabla\hat\psi(z)=Ju^T \nabla \hat{p}(u).
$$
Another propriety of  $\hat{d}^k$ is that it satisfies the normal equations of (\ref{eq: minsqx}):
\begin{equation}\label{eq: normal}
\hat{d}^k=	\left(Ju^TJu\right)^{-1}Ju^T\left(\beta_k\frac{\hat{o}^T u}{\|\hat{o} \|^2}\hat{o}-u	\right).
\end{equation}
Therefore, from the assumptions of the theorem and by exploiting the (\ref{eq: normal}) it is possible to obtain:
$$
\begin{array}{rcl}
\nabla \hat\psi(z)^T\hat{d}^k&=&\nabla \hat{p}(u)^T(Ju)\hat{d}^k\\
&\overset{(\ref{eq: normal})}{=}&\nabla \hat{p}(u)^TJu\left(Ju^TJu\right)^{-1}Ju^T\left(\beta_k\frac{\hat{o}^T u}{\|\hat{o} \|^2}\hat{o}-u	\right)\\
&=&\nabla \hat{p}(u)^TJuJu^{-1}(Ju^T)^{-1} Ju^T\left(\beta_k\frac{\hat{o}^T u}{\|\hat{o} \|^2}\hat{o}-u	\right)\\
&=&\nabla \hat{p}(u)^T\left(\beta_k\frac{\hat{o}^T u}{\|\hat{o} \|^2}\hat{o}-u	\right)
\le-\nabla \hat{p}(u)^Tu(1-\frac{\beta_k}{\bar\beta})\\
&=&-\nabla p(H(z))\bullet H(z) (1-\frac{\beta_k}{\bar\beta})\overset{(A5)}{<}0,
\end{array}
$$
where with $Ju^{-1}$ and $(Ju^T)^{-1}$ are the Moore Penrose  pseudo inverses of $Ju$ and $Ju^T$ respectively. The third equality derives from the propriety:
$$
(AB)^{-1}=B^{-1}A^{-1},
$$
valid for the Moore Penrose  pseudo inverse in the case we are considering (interested readers can refer to \cite{G66}). The last equality follows from the definition of $\hat{H}(z)$ and $\hat{p}(u)$.

\qed
\end{proof}
At step {\tt (S.3)} the potential function (\ref{eq: genpot}) is used to measure the progress of the algorithm. Finally at Step {\tt (S.4)} the value of $k$ is updated and the loop is completed.

It is possible to observe that the sequence generated by Algorithm \ref{alg: CPRA} necessarily belongs to $\Omega$. We now present the convergence result:
\begin{theorem}
Let $\{z^k\}$ be the sequence generated by Algorithm \ref{alg: CPRA}, then:
\begin{enumerate}[(a)]
\item the sequence $\{H(z^k)\}$ is bounded;\label{the: bound}
\item any accumulation point of $\{z^k \}$, if it exists, solves $CE(\Omega, H)$;\label{the: solution}
\item $\lim_{k\rightarrow \infty} H(z^k)=0$;\label{the: limzero}
\item the sequence $\{z^k\}=\{(x^k,\sigma^k,L^k,W^k)\}$ is bounded.\label{the: ombound}
\end{enumerate}
\end{theorem}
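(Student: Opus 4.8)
The plan is to read off parts (a)--(c) from the general convergence theory of the potential reduction method of \cite{MP99} and \cite{FaP 03}, which applies once the preceding theorems have verified (A1)--(A6) and the descent property of the direction $d^k$, and then to prove the genuinely problem-specific part (d) directly from the block structure of $H$.

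For (a) and (c): the Armijo test in (S.3) makes $\{\psi(z^k)\}=\{p(H(z^k))\}$ nonincreasing, hence convergent since $p$ is bounded below along the iterates; by (A4) a bounded value of the potential keeps $\{H(z^k)\}$ inside a fixed bounded subset of $\mbox{int }\calS$ that stays away from $\mbox{bd }\calS\setminus\{0\}$, which is (a). Combining the descent estimate $\nabla\psi(z^k)\bullet d^k\le-(1-\beta_k/\bar\beta)\,\nabla p(H(z^k))\bullet H(z^k)$, obtained in the descent theorem from the normal equations \eqref{eq: normal}, with the Armijo decrease and summing over $k$ gives $\nabla p(H(z^k))\bullet H(z^k)\to 0$; by (A5) and (A4) (any subsequence with $\|H(z^{k_j})\|\ge\epsilon$ lives in a compact subset of $\mbox{int }\calS$ not containing $0$, where $u\bullet\nabla p(u)$ is bounded below by a positive constant) this forces $H(z^k)\to 0$, which is (c). Part (b) is then immediate: $H$ is continuous on the closed set $\Omega$, so any accumulation point $\bar z$ of $\{z^k\}$ lies in $\Omega$ and satisfies $H(\bar z)=\lim_j H(z^{k_j})=0$, i.e.\ $\bar z$ solves $CE(\Omega,H)$.

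For (d) I would argue component by component, using that $H(z^k)\to 0$ by (c), so each block of $H(z^k)=\bigl(\Gamma_L(x^k,\sigma^k,L^k),\,W^k-G(\sigma^k),\,(L^kW^k+W^kL^k)/2,\,L^k\bigr)$ tends to $0$. The fourth block gives $L^k\to 0$ outright, hence $\nabla_\sigma(L^k\bullet G(\sigma^k))=(L^k\bullet C_j)_j\to 0$ and therefore $\Gamma(x^k,\sigma^k)\to 0$. Splitting $\Gamma$ into its two halves yields $G(\sigma^k)x^k-F(\sigma^k)\to 0$ and $\nabla_{\sigma_j}V^*(\sigma^k)-\Lambda_j(x^k)\to 0$ for every $j$; the latter is exactly the canonical pairing $\nabla V^*(\sigma^k)\approx\Lambda(x^k)$. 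Invoking coercivity of the (strictly convex, $C^2$) Legendre conjugate $V^*$ together with boundedness of $\{\Lambda(x^k)\}$ gives $\{\sigma^k\}$ bounded; continuity of $G$ then makes $\{G(\sigma^k)\}$ bounded and $W^k=G(\sigma^k)+(W^k-G(\sigma^k))$ bounded; finally $\{x^k\}$ is bounded because for a coercive (geometrically admissible quadratic) operator $\Lambda$ the set $\{x:\Lambda(x)\ \mbox{bounded}\}$ is bounded.

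The main obstacle is exactly part (d): the abstract framework controls only $\{H(z^k)\}$, so all the work lies in turning ``the KKT residual $\Gamma(x^k,\sigma^k)$ is small'' into boundedness of $(x^k,\sigma^k,W^k)$. The delicate point is that $G(\sigma^k)$ may become (nearly) singular along the sequence, so $\{x^k\}$ cannot be recovered from $G(\sigma^k)x^k-F(\sigma^k)\to 0$ alone; one must instead use the $\sigma$-part of the residual, where $\nabla V^*(\sigma^k)-\Lambda(x^k)\to 0$ simultaneously bounds $\{\sigma^k\}$ through coercivity of $V^*$ and $\{x^k\}$ through coercivity of $\Lambda$. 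Making these two coercivity requirements precise — and checking them against the canonical-duality data at hand — is the heart of the argument.
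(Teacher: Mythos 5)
Your proposal reverses the logical order the paper uses, and the reversal opens a real gap. You derive (c) ($\lim_k H(z^k)=0$) unconditionally from the potential-reduction framework and then use it to prove (d). But in the Monteiro--Pang framework the unconditional conclusions are only (a) and (b); the conclusion $\lim_k H(z^k)=0$ is \emph{conditional} on the iterates having accumulation points, i.e.\ on (d). That is exactly why the paper proves (d) first, using only the boundedness of $\{H(z^k)\}$ from (a), and then obtains (c) as a consequence of (b) and (d). Your specific mechanism for (c) also fails on its own terms: for the potential (\ref{eq: genpot}) one computes $u\bullet\nabla p(u)=2\eta-3n$, a \emph{positive constant} on all of $\mbox{int}\,\calS\setminus\{0\}$ (the potential is positively homogeneous of degree zero in this pairing), so the quantity $\nabla p(H(z^k))\bullet H(z^k)$ can never tend to zero while $H(z^k)\neq 0$. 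What the summed Armijo decreases actually give is a dichotomy: either $\psi(z^k)\to-\infty$ (in which case $H(z^k)\to 0$ does follow), or $\psi$ is bounded below and the step sizes $\alpha_k$ degenerate --- and ruling out the second branch is precisely where one needs compactness of $\{z^k\}$, which you have not yet established at that point.

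The good news is that your argument for (d) never actually needs $H(z^k)\to 0$: every step of it ("the fourth block gives $L^k$ small, hence $\Gamma(x^k,\sigma^k)$ controlled, hence $\sigma^k$ and $x^k$ controlled via the two halves of $\Gamma$, hence $W^k$ controlled via the block $W-G(\sigma)$") goes through verbatim with "tends to $0$" replaced by "is bounded", which is available from (a). Rewriting (d) that way restores the correct dependency chain (a)$\Rightarrow$(d), (b)+(d)$\Rightarrow$(c), and then your argument coincides in substance with the paper's (the paper phrases (d) as a case-by-case contradiction using $\nabla_x\Xi=G(\sigma)x-F(\sigma)$ and the $\sigma$-gradient, yours is a direct coercivity argument). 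Do note, however, that you are importing two hypotheses the paper does not state --- coercivity of $V^*$ and coercivity of the operator $\Lambda$ (the latter fails, e.g., for the SNL operators $\|x_i-x_j\|^2$, which are translation-invariant) --- so you are right to flag that making these precise is where the remaining work lies.
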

\begin{proof}
The proof of statements (\ref{the: bound}) and (\ref{the: solution}) follows from Theorem 3 of \cite{MP99}.

In order to prove the (\ref{the: limzero}) we first have to prove the (\ref{the: ombound}), that is the boundedness of $\{z^k\}$. To prove the boundless of $\{z^k\}$ we have to prove the boundedness of the sequences $\{x^k\},\{\sigma^k\},\{L^k\}$ and $\{W^k\}$. The boundedness of $\{L^k\}$ is a direct consequence of the boundedness of $\{H(z^k)\}$.

To prove the boundedness of the sequences $\{x^k\}$ and $\{\sigma^k\}$ we use the operator $\Gamma$. In detail, from the (\ref{eq: dualmap}) and (\ref{eq: genxi}) we obtain:

\begin{eqnarray}
\nabla_x \Xi(x,\sigma)=G(\sigma)x-F(\sigma)\vspace{0.5em},\label{eq: xi1}\\
-\nabla_\sigma \Xi(x,\sigma)= \sigma-\nabla V(\Lambda(x))\label{eq: xi2}.
\end{eqnarray}

It is easy to see that if one of the two sequences goes to infinity while the other converges, $\|\Gamma(x^k,\sigma^k)\|\rightarrow \infty$ contradicting the (\ref{the: bound}). 

\noindent
We consider the case in which $\{x^k\}$ and $\{\sigma^k\}$ go to infinity simultaneously. if $\{x^k\}$ is unbounded, the sequence $\{\Lambda(x^k)\}$ could either converge or go to infinity. 
Considering that $V(\Lambda(x))$ is convex and differentiable in $\Lambda(x)$ that is a non-linear operator in $x$, if $\|\Lambda(x^k)\|\rightarrow \infty$, $\|\nabla_\sigma \Xi(x^k,\sigma^k)\|\rightarrow \infty$  contradicting the (\ref{the: bound}), therefore $\{\Lambda(x^k)\}$ converges. If the sequence $\{\Lambda(x^k)\}$ converges to a finite value and $\|{\sigma^k}\|\rightarrow \infty$, from the (\ref{eq: xi2}) we have $\|\nabla_\sigma \Xi(x^k,\sigma^k)\|\rightarrow \infty$ contradicting the (\ref{the: bound}), then $\{\sigma^k\}$ converges, and also $\{x^k\}$ converges. Finally if we suppose that $\{W^k\}\rightarrow\infty$,  from the boundedness of $\{\sigma^k\}$ and constraint $W-G(\sigma)$ we obtain the desired contradiction with the (\ref{the: bound}).

The (\ref{the: limzero}) is a direct consequence of conditions (\ref{the: solution}) and (\ref{the: ombound}).
\qed
\end{proof}

\section{Canonical Duality for the Sensor Network Localization Problem}

We consider the problem with $N$ sensor and $N_a$ anchors in a space of dimension $dim$. For such network, the sensor localization problem consists in locating the $n=N*dim$ unknown coordinates of the sensors that match the given distances $h$ between the sensors and the $e$ distances between the sensors and the anchors. 
Let $\rho>0$ be a radio range. A sensor $i$ is in range to another sensor $j$ if their euclidean distance $h_{ij}$ is not greater than $\rho$. If such distance is greater than $\rho$, the two sensors do not influence each other. The same reasoning is valid for a sensor-anchor pair and their distance $e_{ik}$. For this reason we introduce the following two sets:
$$
{\cal A}_h=\left\{(i,j): \|\tilde{x}_i-\tilde{x}_j \|\le \rho, i\neq j  \right\},
{\cal A}_e=\left\{(i,k): \|\tilde{x}_i-a_k \|\le \rho  \right\},
$$
Where $\tilde{x}$ indicates the real positions of the sensors and $a$ indicates the known positions of the anchors.
Therefore, the unknown location of the sensors can be found by solving the following system of equations in the variable $x$:
$$
\begin{array}{c}
\|x_i-x_j\|=h_{ij}, \quad (i,j) \in {\cal A}_h\\
\|x_i-a_k\|=e_{ik}, \quad (i,k) \in {\cal A}_e.
\end{array}
$$ 
This system of equations can be solved for small dimensional problems, however such method is not practicable when the number of sensors is large. A way to formulate the same problem is by the following non-linear least squares optimization problem \cite{nie09}:
\begin{equation}\label{eq: primal}
\min_{x \in {\mathbb R}^n}\left\{P(x)=\frac{1}{2}\sum_{(i,j)\in A_h}\left(\|x_i-x_j \|^2-h_{ij}^2\right)^2+ \frac{1}{2}\sum_{(i,k)\in A_e}\left(\|x_i-a_k \|^2-e_{ik}^2\right)^2\right\}.
\end{equation}
The value of such optimization problem is zero only if the value of $x$ corresponds to the real locations of the sensors.

It has been shown \cite{ruan-gao-ep}, that by choosing the non-linear operators:
$$
\begin{array}{c}
\xi_{ij}^h=\Lambda_{ij}(x)=\|x_i-x_j \|^2,\\
\xi_{ik}^e=\Lambda_{ik}(x)=\| x_i-a_k\|^2,
\end{array}
$$
from $\real^{n}$ into
$$
\begin{array}{c}
{\cal E}_h=\{\xi_{ij}^h \in \real^{|A_h|}:\xi_{ij}^h\ge 0 \},\\
{\cal E}_e=\{\xi_{ik}^e \in \real^{|A_e|}:\xi_{ik}^e\ge 0 \},
\end{array}
$$ 
and introducing the quadratic functions $V_h:{\cal E}_h\rightarrow \real$ and $V_e:{\cal E}_e\rightarrow \real$ such that:
$$\begin{array}{c}
V_h({\xi^h})=\frac{1}{2}\sum_{(i,j)\in A_h}( \xi_{ij}^h-h_{ij}^2)^2,\\
V_e({\xi^e})=\frac{1}{2}\sum_{(i,j)\in A_e}( \xi_{ik}^e-e_{ik}^2)^2,
\end{array}
$$
the following duality relations are invertible:
\begin{equation}\label{eq: drelation}
\begin{array}{c}
\varsigma^h_{ij}=\frac{\partial V_h(\xi^h)}{\partial \xi_{ij}^h}=\xi_{ij}^h-h_{ij}^2, \quad (i,j)\in {\cal A}_h,\vspace{0.5em}\\
\varsigma^e_{ik}=\frac{\partial V_e(\xi^e)}{\partial \xi_{ik}^e}=\xi_{ik}^e-e_{ik}^2, \quad (i,k) \in {\cal A}_k,
\end{array}
\end{equation}
where $\varsigma^h$ and $\varsigma^e$ are the dual variables. The Legendre conjugates of the two convex functions are defined by:
$$
\begin{array}{c}
V^*_h(\varsigma^h)=\sum_{(i,j) \in A_h} \frac{1}{2}(\varsigma_{ij}^h)^2+h^2_{ij}\varsigma_{ij}^h,\vspace{0.5em}\\
V^*_e(\varsigma^e)=\sum_{(i,k) \in A_e} \frac{1}{2}(\varsigma_{ik}^e)^2+e^2_{ik}\varsigma_{ik}^e.\\
\end{array}
$$
By the Fenchel-Young equality in convex programming we have:
$$
\begin{array}{c}
V_h({\xi^h})=(\xi^h)^T\varsigma^h-V^*_h(\varsigma^h),\vspace{0.5em}\\
V_e({\xi^e})=(\xi^e)^T\varsigma^e-V^*_e(\varsigma^e),\\
\end{array}
$$
and the generalized complementarity function can be written as
\begin{equation}\label{eq: xi}
\begin{array}{ccl}
\Xi(x,\varsigma^h,\varsigma^e)&=&\displaystyle\sum_{(i,j)\in {\cal A}_h}\varsigma_{ij}^h\left(\|x_i-x_j \|^2\right)+ \sum_{(i,k)\in {\cal A}_e}\varsigma_{ij}^e\left(\|x_i-a_k \|^2\right)-\vspace{0.5em}\\
&&\displaystyle V^*_h(\varsigma^h)-V^*_e(\varsigma^e)\vspace{0.5em}\\
&=&\displaystyle \frac{1}{2}x^TG(\varsigma^h,\varsigma^e)x-F(\varsigma^h)^Tx-V^*_h(\varsigma^h)-V^*_e(\varsigma^e)
\end{array}
\end{equation}
where:
\begin{eqnarray*}
F(\varsigma^e) =
\left[
\sum_{k=1}^{N_a} 2 a_{1,k} \varsigma_{1k}^e
\cdots
\sum_{k=1}^{N_a} 2 a_{dim,k} \varsigma_{1k}^e
\cdots
\sum_{k=1}^{N_a} 2 a_{1,k} \varsigma_{nk}^e
\cdots
\sum_{k=1}^{N_a} 2 a_{dim, k} \varsigma_{nk}^e
\right]^T,
\end{eqnarray*}

\begin{eqnarray}\label{eq: G}
G(\varsigma^h, \varsigma^e)= 2(\Diag(F_1(\varsigma^h))
+\Diag(F_2(\varsigma^e))
+ G_3(\varsigma^h)),
\end{eqnarray}
with
\begin{equation}
F_1(\varsigma^h) =
\left[
\begin{array}{c}
\sum_{i=1}^N \varsigma^h_{1i}\\
\vdots\\
\sum_{i=1}^N \varsigma^h_{1i}\\
\vdots\\
\sum_{i=1}^N \varsigma^h_{ni}\\
\vdots\\
\sum_{i=1}^N \varsigma^h_{ni}
\end{array}
\right],
F_2(\varsigma^e) =
\left[
\begin{array}{c}
\sum_{k=1}^{N_a} \varsigma^e_{1k}\\
\vdots\\
\sum_{k=1}^{N_a} \varsigma^e_{1k}\\
\vdots\\
\sum_{k=1}^{N_a} \varsigma^e_{nk}\\
\vdots\\
\sum_{k=1}^{N_a} \varsigma^e_{nk}
\end{array}
\right],
\end{equation}

\begin{eqnarray*}
G_3(\varsigma^1) =
\left[
\begin{array}{ccc}
-\varsigma^h_{11}I_{dim}&\cdots&-\varsigma^h_{1n}I_{dim}\\
\vdots&\vdots&\vdots\\
-\varsigma^h_{n1}I_{dim}&\cdots&-\varsigma^h_{nn} I_{dim}
\end{array}
\right],
\end{eqnarray*}
where $\varsigma^h_{ij}=0$ if $(i,j)\neq {\cal A}_h$ and  $\varsigma^e_{ik}=0$ if $(i,k)\neq {\cal A}_e$.
By exploiting the critical conditions $\nabla_x\Xi(x,\varsigma^h,\varsigma^e)=0$ we obtain the formulation of the dual problem:
\begin{equation}
P^d(\varsigma^h,\varsigma^e)= -\frac{1}{2} F(\varsigma^e)^TG(\varsigma^h,\varsigma^e)F(\varsigma^e)-V^*_h(\varsigma^h)-V^*_e(\varsigma^e).
\end{equation}
For notational convenience we make the following change of variables $\sigma=(\varsigma^h,\varsigma^e)$, $\sigma\in \calS_a \subseteq \real^m$, where $m=|{\cal A}_h|+|{\cal A}_e|$ and $V^*(\sigma)=V^*_h(\varsigma^h)+V^*_e(\varsigma^e)$. The global optimality conditions are a direct consequence of Theorem \ref{th: triality}:

\begin{theorem}\label{th: glob}
If $\bar\sigma\in\calS_a^+$ is a critical point of the canonical dual function $P^d(\sigma)$ then the vector $\barx=G^{-1}(\bar\sigma)F(\bar\sigma)$ is the global optimal solution to the primal problem $P(x)$ and
$$
\min_{x\in \real^{n}} P(\barx)=\min_{x\in \real^{n}}\max_{\sigma\in\calS_a^+}\Xi(\barx,\bar\sigma)=\max_{\sigma\in\calS_a^+} P^d(\sigma).
$$
\end{theorem}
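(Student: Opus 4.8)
\medskip
\noindent\textbf{Proof (sketch of the intended argument).}
The plan is to recognise the non-convex least squares problem (\ref{eq: primal}) as a concrete instance of the general problem $({\cal P})$ of Section~2 and then to read the conclusion off from the Complementarity-Dual Principle (Theorem~\ref{th: tcp}) together with the \emph{Global Optimum} part of the Triality Theory (Theorem~\ref{th: triality}). First I would carry out the structural identification: (\ref{eq: primal}) has the form $P(x)=W(x)+\frac12 x^TAx-c^Tx$ with $A={\bf 0}_{n\times n}$ and $c={\bf 0}_n$ (the linear and constant terms produced by expanding $\|x_i-a_k\|^2$ are absorbed, respectively, into the anchor part of the nonlinear operator and into an irrelevant additive constant), with $W(x)=V_h(\Lambda^h(x))+V_e(\Lambda^e(x))$ whose components are the quadratic maps $\xi^h_{ij}=\|x_i-x_j\|^2$ and $\xi^e_{ik}=\|x_i-a_k\|^2$ as in (\ref{eq: quadop}), and with $V=V_h+V_e$ a separable sum of one-dimensional convex quadratics. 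In particular $V$ is convex and smooth, $V^*_h,V^*_e$ are its Legendre transforms and are (strongly) convex quadratics, hence twice continuously differentiable; consequently $\calS_a=\real^m$, the set $\calS_a^+=\{\sigma\in\real^m:G(\sigma)\succeq0\}$ is a spectrahedron (in particular convex), the expression (\ref{eq: xi}) for $\Xi$ is exactly the specialisation of (\ref{eq: genxi}), and the dual $P^d(\sigma)=-\frac12 F(\sigma)^TG(\sigma)^{-1}F(\sigma)-V^*(\sigma)$ is well defined and differentiable wherever $G(\sigma)$ is invertible.

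Next I would invoke Theorem~\ref{th: tcp}: since $\bar\sigma\in\calS_a^+$ is a critical point of $P^d$, the vector $\barx=G^{-1}(\bar\sigma)F(\bar\sigma)$ is a critical point of $P(x)$ and $P(\barx)=\Xi(\barx,\bar\sigma)=P^d(\bar\sigma)$. Differentiating the identity $P^d(\sigma)=\Xi\big(G^{-1}(\sigma)F(\sigma),\sigma\big)$ and using $\nabla_x\Xi(\barx,\bar\sigma)={\bf 0}_n$ (which defines $\barx$) gives $\nabla_\sigma\Xi(\barx,\bar\sigma)=\nabla P^d(\bar\sigma)={\bf 0}_m$, so $(\barx,\bar\sigma)$ is a stationary point of $\Xi$ in both blocks of variables.

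The decisive step is to upgrade ``critical point of $P^d$'' to ``global maximiser of $P^d$ over $\calS_a^+$'', which is precisely the hypothesis that the \emph{Global Optimum} statement of Theorem~\ref{th: triality} requires. For this I would prove that $P^d$ is concave on the convex set $\calS_a^+$: $V^*$ is convex, and the matrix-fractional term $\sigma\mapsto F(\sigma)^TG(\sigma)^{-1}F(\sigma)$ is jointly convex on $\{\sigma:G(\sigma)\succ0\}$ by the standard Schur-complement argument (its epigraph is cut out by a linear matrix inequality that is affine in $(\sigma,t)$, because $G(\cdot)$ and $F(\cdot)$ are affine here). Since a critical point of a concave function over a convex set is a global maximiser, $P^d(\bar\sigma)=\max_{\sigma\in\calS_a^+}P^d(\sigma)$. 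Applying the \emph{Global Optimum} statement of Theorem~\ref{th: triality} now gives that $\barx$ is the (unique) global minimiser of $P$, i.e. $\min_{x\in\real^n}P(x)=P(\barx)$. Combining this with the equalities of the previous paragraph, and using that $(\barx,\bar\sigma)$ is a stationary point of $\Xi$ with $\Xi(\cdot,\bar\sigma)$ convex and $\Xi(\barx,\cdot)$ concave over $\calS_a^+$ (equivalently, the weak-duality inequality $P(x)\ge P^d(\sigma)$ on $\calS_a^+$ noted in the Remark of Section~2), one concludes $\min_{x\in\real^n}\max_{\sigma\in\calS_a^+}\Xi(x,\sigma)=\Xi(\barx,\bar\sigma)$, which closes the chain of equalities in the statement.

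The only genuinely delicate point is the degenerate case in which $G(\bar\sigma)$ is singular, i.e. $\bar\sigma$ lies on the face of $\partial\calS_a^+$ where $G$ loses rank: there the formula $\barx=G^{-1}(\bar\sigma)F(\bar\sigma)$ must be interpreted through the compatibility condition $F(\bar\sigma)\in\mathrm{range}\,G(\bar\sigma)$ (supplied by criticality of $P^d$), and a Moore--Penrose / limiting argument is needed to extend both the matrix-fractional convexity and Theorem~\ref{th: tcp} up to that boundary. Everything else is the routine bookkeeping identifying the SNL problem with an instance of the canonical-duality machinery of Section~2, so no new difficulty arises there.
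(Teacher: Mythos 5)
Your proposal is correct and takes essentially the same route as the paper, which offers no proof at all beyond the single sentence that the result is ``a direct consequence of Theorem~\ref{th: triality}.'' The one substantive step you add --- using concavity of $P^d$ on the convex set $\calS_a^+$ (via the Schur-complement/matrix-fractional argument) to upgrade ``critical point'' to ``global maximizer,'' which is what the Global Optimum clause of Theorem~\ref{th: triality} actually requires --- fills in precisely the gap the paper leaves implicit (it only remarks informally after Theorem~\ref{th: triality} that the dual is concave on $\calS_a^+$), and your caveat about the case of singular $G(\bar\sigma)$ flags a genuine degeneracy that the paper does not address.
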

The solution can be found by applying Algorithm 1 to solve the monotone variational inequality:
\begin{equation}\label{eq: VI}
\Gamma(x,\sigma)=0,\quad G(\sigma)\succeq 0,
\end{equation}
as explained in the previous section. However, as we said in the introduction, for the sensor network localization problem it is possible to simplify the feasible set. From the (\ref{eq: drelation}) it follows that if $(\barx, \bar\sigma)$ is a stationary point of $\Xi(x,\sigma)$ then:
$$
\begin{array}{c}
\displaystyle \bar\varsigma^h_{ij}= \|\barx_i-\barx_j \|^2-h_{ij}^2, \quad \forall (i,j)\in {\cal A}_h\vspace{0.5em},\\
\displaystyle \bar\varsigma^e_{ik}= \|\barx_i-a_k \|^2-e_{ik}^2, \quad \forall (i,k)\in {\cal A}_e.\\
\end{array}
$$
These conditions impose that the vector of dual variables $\bar\sigma$ corresponding to the global minimum of the primal problem $\barx$ is such that
$$
\bar\sigma={\bold 0}_m.
$$ 
This observation, and the fact that $\real^m_+\subset \calS_a^+$, makes possible to replace constrain $G(\sigma)\succeq 0$ by  $\sigma_i\ge0$ for all $i=1,\dots, m$ , and problem (\ref{eq: VI}) can be cast as:
\begin{equation}\label{eq: VIsim}
\Gamma(x,\sigma)=0,\quad \sigma_i\ge 0, \forall i=1,\dots,m.
\end{equation}
As we said in the introduction, for many formulations obtained by canonical duality, the constrain on the matrix $G(\sigma)$ can be simplified, therefore the general potential reduction framework can be adapted to create faster and simpler algorithms. Constrain $G(\sigma)\succeq 0$ corresponds to introduce $n^2$ constraints while only $m\le n^2$ dual variables are introduced. It is probable that no more than $m$ constrains should be introduced for problems reformulated with canonical duality. That is, in most reformulations the constraint $G(\sigma)\succeq 0$ can be replaced by simpler constrains.

One of the main difficulties of the problem (\ref{eq: VIsim}) is that the solution is located on the boundary of the dual space. As a matter of the facts the algorithm could experience slow convergence when approaching to the boundary. Therefore we change the constraints $\sigma_i\ge0,\forall i=1,\dots,m$ in $\sigma_i+\delta\ge 0,\forall i=1,\dots,m$ and decrease the value of $\delta$ by multiplying  it for a constant $\gamma_2\in(0,1)$ at every iteration, in order to assure that the algorithm converges in $\calS_a^+$.
To solve problem (\ref{eq: VIsim}) we  simplify the (\ref{eq: CE}) in the following CE system:
\begin{equation}\label{eq: CEs}
H_\delta(z)=0, \quad z=(x,\sigma,\lambda,w)\in \Omega
\end{equation}
Where $H:\Omega\rightarrow S$ with $\Omega=\real^{n+m}\times \real^{m}_{+}\times\real^{m}_{+}$ and $S=\real^{n+m}\times \real^{m}_{+}\times\real^{m}_{+}\times\real^{m}_{+}$, is defined as:
$$
H_\delta(x,\sigma,\lambda,w)=\left(\begin{array}{c}
\nabla_x \Xi(x,\sigma)\\
-\nabla_\sigma\Xi(x,\sigma)+ \lambda\\
w-\sigma+\delta\\
w\circ \lambda\\
\lambda\\
\end{array}\right),
$$
Where $\lambda\in \real^m$ are the Lagrange multipliers and $w\in\real^m$ are the slack variables associated with the constraints. System (\ref{eq: CEs}) is far more simple than system (\ref{eq: CE}) and the fact that we have to handle linear constraints instead of a matrix constrain also increases efficiency. 
Given the potential function:
$$
p(a,b,c,d)= \eta \log(\|a\|^2+\| b \|^2+\|c \|^2+\|d \|^2)-\sum_{i=1}^{m}\left(\log(b_i)+\log(c_i)+\log(d_i)\right),
$$
we let
$$
\psi_\delta(z)=p(H_\delta(z)),
$$
and report the potential reduction algorithm for canonical duality applied to the SNL problem:
\begin{algo}{CPRAS: Complementarity Potential Reduction Algorithm for Sensor Network Localization}{} \label{alg: CPRAS}
{\tt (S.0):} 
Choose $ z^0=(x^0,,\sigma^0 \lambda^0, w^0) \in  \Omega $, $ \delta_0 > 0, \gamma_1 \in (0,1), \gamma_2 \in (0,1), \epsilon>0, o=({\bold 0}_{n+m},{\bold 1}_{m},{\bold 0}_{m},{\bold 0}_{m})$,
      and set $ k := 0 $.\\[0.8em]
   {\tt (S.1):} If $\|\Gamma(x,\sigma)\|^2<\epsilon$: STOP\\[0.8em]
   {\tt (S.2):} Choose a scalar $\beta^k\in (0,1)$ and find a solution $d^k$ of the following linear least squares problem
   $$
   \min_{d^k}\left\{\frac{1}{2}\left\|JH_{\delta^k}(z^k)d^k+H_{\delta^k}(z^k)-\beta_k\frac{o^T H_{\delta^k}(z^k)}{\|o \|^2}o\right\|^2\right\}.
   $$
   {\tt (S.3):} find a step size  $\alpha_k$ such that
   $$
   z^k+\alpha_k d^k \in \Omega
   $$  
   and
   $$
   \psi_{\delta^k}(z^k+\alpha_kd^k)\le \psi_{\delta^k}(z^k)+\gamma_1\nabla \psi_{\delta^k}(z^k)^Td^k
   $$
   \\[0.8em]
   {\tt (S.4):} Set $z^{k+1} = z^k+\alpha_kd^k$, $\delta^{k+1}=\gamma_2\delta^k$, $k \leftarrow k+1 $, and go to (S.1).
\end{algo}
Algorithm \ref{alg: CPRAS} is a particular version of Algorithm \ref{alg: CPRA}, therefore it can be easily proved that it has the same convergence proprieties.

\section{Numerical Experience}

In this section we present the numerical experience of the proposed methodology to the sensor network localization problem. The problems we analyze comprehend both 2-Dimensional and 3-Dimensional large size networks. The numerical experiments are performed on a 3.4 GHz Quad core intel i7-3770 cpu with 16 GB of RAM using Matlab 7.12.0(R2011a) in a Windows 7 environment.

Through the numerical experiments we generate the sensors randomly in the unit square $[0,1]^2$ for 2-Dimensional problems or the unit cube $[0,1]^3$ for 3-Dimensional problems. The number of sensors ranges from 500, 1000, 1500, 2000 to 2500. For 2-Dimensional problems  the radio range is set to 0.5 and  the number of anchors is 4, while for 3-Dimensional problems the radio range is 1 and the number of anchors is 8. The anchors are positioned  at the corners of the unit square or the unit cube. In order to calculate the accuracy of the solution we adopt the root square distance:
$$
RMSD=\left(\frac{1}{N}\sum_{p=1}^N\|\tilde{x}_p-x^*_p \|^2\right)^{1/2},
$$
Where $\tilde{x}$ and $x^*$ indicate the real and calculated positions of the sensors respectively.
In the numerical experiments, each type of test problem is generated five times changing the random seed for the sensors locations and then tested. 

In order to better understand the results of the proposed methodology and to have a comparative benchmark, we also solved the instances of the SNL problem using the ESDP formulation proposed in \cite{BLTWY06,WZYB08} which code can be downloaded from \url{http://www.stanford.edu/~yyye/Col.html}. ESDP is an edge based semidefinite programming relaxation for the 2-Dimensional SNL problem. The code in Matlab generates the relaxed formulation and then calls SeDuMi in oder to find a solution. SeDuMi \cite{sturm99}  is a solver written in C for optimization problems with linear, quadratic and semidefinite constraints with a Matlab interface.

The parameters in Algorithm \ref{alg: CPRAS}  are: $\eta=\frac{n+4m}{2}$, $\delta^0=0.3$ with $\gamma_2=0.9$
, the initial point is the vector of all ones for the primal variables and the vector of all tens for the dual variables. The stopping parameter is set to $\epsilon=10^{-10}$, in order to have a comparative accuracy with ESDP. The parameters in ESDP are set to their standard settings while the value of the degree is set to $3$.
In Table \ref{tab: res} we report:
\begin{itemize}
\item The dimension of the problem;
\item The number n of primal variables;
\item The average number m of dual variables;
\item The average number of iterations for CPRAS to reach a solution. We remind the reader that every iteration corresponds to solving a large size least squares problem, that consists in one of the main time consuming tasks of the algorithm. The problem is solved by using the \emph{lsqr} function in Matlab;
\item the average RMDS of CPRAS on the five instances;
\item The average time in seconds of CPRAS on the five instances. 
\item the average RMDS of ESDP on the five instances;
\item The average time in seconds of ESDP on the five instances. 
\end{itemize}
\begin{table}[ht]
\begin{center}
\begin{small}
\begin{tabular}{|cc||cccc||cc|}
\hline
&&\multicolumn{4}{|c||}{CPRAS}& \multicolumn{2}{|c|}{ESDP}\\
\hline
Dim	&	n	&	m	&	Iter	&	RMDS	&	Time(s)	&	RMDS	&	Time(s)\\
\hline												
2D	&	1000	&	8639	&	14	&	9.65E-08	&	16.2	&	2.82E-07	&	40.6	\\
2D	&	2000	&	17645	&	15	&	8.51E-08	&	56.6	&	5.13E-07	&	109.4	\\
2D	&	3000	&	26643	&	17	&	7.33E-08	&	125.8	&	3.42E-07	&	160.3	\\
2D	&	4000	&	35659	&	18	&	6.78E-09	&	236.1	&	1.58E-04	&	328.1	\\
2D	&	5000	&	44664	&	18	&	1.40E-08	&	337.4	&	3.45E-05	&	370.7	\\
\hline
\end{tabular}
\end{small}
\end{center}
\caption{Results of the proposed methodology to the sensor network localization problem on 2-Dimensional instances of different sizes compared with ESDP.}\label{tab: res}
\end {table}
In Table \ref{tab: res} it is possible to notice that the algorithm is able to converge to a very accurate solution for large sized problems with almost 50000 variables in less than 350 seconds. The smallest analyzed instances with almost 10000 variables are solved in roughly 15 seconds, and the time needed to reach a solution seems to quadruple every time the number of primal variables is doubled.
 It is also interesting to notice that the number of iterations to satisfy the stopping criterion increases very slowly with the increase of the number variables and that the precision of the algorithm is not affected by the increased size of the problem. 

From the numerical results it is possible to notice that Algorithm \ref{alg: CPRAS} is capable of obtaining better accuracy than ESDP, especially for large sized instances. 
The comparison of the CPU times between the two algorithms is somewhat more problematic. As a matter of facts, CPRAS is written in Matlab, an interpreted language, and even if one of its main computational burdens, the solution of the least squares problem, is carried out rather efficiently with the function \emph{lsqr}, it is expected to go slower than SeDuMi, an algorithm written in an high efficiency code such as C. Nevertheless we also report in Table \ref{tab: res} the CPU times because they could be of interest to the readers. These results show that even in its prototypical implementation, CPRAS compares really well to ESDP with SeDuMi. In detail, the interior point algorithm is capable to reach the solution faster than the benchmark in both smaller and bigger instances.
\begin{table}[ht]
\begin{center}
\begin{small}
\begin{tabular}{|cccccc|}
\hline
Dim& n&		m&		Iter&		RMDS&			Time(s)\\
 \hline
3D	&	1500	&	11760	&	15	&	3.31E-08	&	32.5	\\
3D	&	3000	&	23762	&	18	&	2.86E-08	&	135.1	\\
3D	&	4500	&	35757	&	19	&	2.28E-08	&	280.3	\\
3D	&	6000	&	47757	&	19	&	2.00E-08	&	480.5	\\
3D	&	7500	&	59766	&	21	&	2.12E-08	&	776.8	\\
 \hline
\end{tabular}
\end{small}
\end{center}
\caption{Results of the proposed methodology to the sensor network localization problem on 3-Dimensional instances of different sizes.}\label{tab: 3d}
\end {table}

In Table \ref{tab: 3d} we report the results of CPRAS on 3-Dimensional SNL instances. The results show that Algorithm \ref{alg: CPRAS} is able to solve instances with almost 70000 variables in less than 1000 seconds. The behavior of the algorithm is quite similar to the case of 2-Dimensional instances, however it can be noticed that 3-Dimensional problems are more difficult to solve than 2-Dimensional problems, because instances with roughly the same number of variables take more iterations and time to reach the solution.

In many sensors network localization problems the measurements on the distances are affected by noise. The presence of noise greatly complicates problem (\ref{eq: primal}). Therefore we report some tests effectuated on sensor network localization problem where the distances are affected by noise in order to test the robustness of the algorithm on more complicated instances. Given a noisy factor $\alpha$, the distances are perturbed in the following way:
$$
\begin{array}{c}
\hat{d}_{ij}=\max\{(1+\alpha\nu_{ij}),0.1 \}\|\bar{x}_i-\bar{x}_j \|,(i,j)\in {\cal A}_h\\
\hat{e}_{ik}=\max\{(1+\alpha\nu_{ik}),0.1 \}\|\bar{x}_i-a_k \|,(i,k)\in {\cal A}_e
\end{array}
$$
where $\nu_{ij}$ and $\nu_{ik}$ are chosen from the standard normal distribution. 

In Table \ref{tab: 2dn} and \ref{tab: 3dn} we report the results on several networks of different sizes with noisy factor $\alpha=0.001$ for 2-Dimensional and 3-Dimensional problems.
\begin{table}[ht]
\begin{center}
\begin{small}
\begin{tabular}{|cc||cccc||cc|}
\hline
&&\multicolumn{4}{|c||}{CPRAS}& \multicolumn{2}{|c|}{ESDP}\\
\hline
Dim	&	n	&	m	&	Iter	&	RMDS	&	Time(s)	&	RMDS	&	Time(s)\\
\hline												
2D	&	1000	&	11366	&	16	&	2.27E-04	&	23.5	&	3.56E-04	&	27.8	\\
2D	&	2000	&	23371	&	18	&	2.28E-04	&	85.2	&	1.16E-03	&	78.9	\\
2D	&	3000	&	35371	&	19	&	2.18E-04	&	200.5	&	3.43E-04	&	120.8	\\
2D	&	4000	&	47389	&	19	&	2.17E-04	&	333.2	&	3.41E-04	&	187.8	\\
2D	&	5000	&	59399	&	20	&	2.17E-04	&	526.6	&	3.44E-04	&	239.6	\\\hline
\end{tabular}
\end{small}
\end{center}
\caption{Results of the proposed methodology to the sensor network localization problem on 2-dimensional instances of different sizes compared with ESDP with noise.}\label{tab: 2dn}
\end {table}
\begin{table}[ht]
\begin{center}
\begin{small}
\begin{tabular}{|cccccc|}
\hline
Dim& n&		m&		Iter&		RMDS&			Time(s)\\
 \hline
3D	&	1500	&	15551	&	21	&	5.36E-04	&	65.5	\\
3D	&	3000	&	31554	&	28	&	5.57E-04	&	312.8	\\
3D	&	4500	&	47551	&	29	&	5.45E-04	&	619.0	\\
3D	&	6000	&	63551	&	29	&	5.63E-04	&	1065.6	\\
3D	&	7500	&	79569	&	32	&	5.63E-04	&	1797.5	\\
 \hline
\end{tabular}
\end{small}
\end{center}
\caption{Results of the proposed methodology to the sensor network localization problem on 3-Dimensional instances of different sizes with noise.}\label{tab: 3dn}
\end {table}

From the results reported in the tables, it is evident that the presence of noise significantly increases the difficulty of the problem for Algorithm \ref{alg: CPRAS}. Not only the number of variables in the dual problem increases in respect to the noiseless problem, but also the time needed to reach a solution increases and the precision of the solution is less accurate. 

Algorithm \ref{alg: CPRAS} still compares well in respect to ESDP. From the CPU times comparison it is evident that ESDP stops very early in noise affected instances when it founds a good enough approximation of the solution. This is also reflected with its better CPU time performances even in comparison to the noiseless problem. On the other hand, the fact that the ESDP stops early affects the accuracy of the solution. As a matter of fact, Algorithm \ref{alg: CPRAS} has better performances than ESDP in terms of accuracy in all the analyzed instances.

From the results it emerges that Algorithm \ref{alg: CPRAS} reaches a good approximation of the real solution in reasonable time in the all the 100 analyzed instances, even in the ones affect with noise, therefore showing a certain level of reliability even in its prototype version in Matlab. 

\section{Conclusions}

We presented an interior points method framework for canonical duality theory that converges under mild assumptions. The framework in this paper not only has really favorable convergence proprieties, but it is also general, able to handle large sized problems efficiently with a good level of reliability and compares well in respect to the benchmark. 

In our view, these results constitute an important step for several topics in optimization. The new findings of this paper indicate that it is possible to adapt interior points methods to the problems reformulated with canonical duality. Therefore other popular interior points methods such as primal-dual methods could be used to solve problem (\ref{eq: intsad}) and find the global solution of many non-convex optimization problems efficiently.

There are also several applications that can be investigated with the presented framework. In detail, the maximum cut problem and the radial basis function neural networks problems can also be solved with canonical duality \cite{LaG 14,wang-etal}, and the proposed algorithm could be useful to find their global solutions for large sized instances. Furthermore, a more extensive analysis with high efficiency code can be performed on the SNL problem reformulated with canonical duality theory.

\section*{Acknowledgments}
The Author would like to thank Simone Sagratella for his help. Without his suggestions in the initial conception of this method, it would have been quite difficult to understand the right path to take in order to create the presented framework. The Author would also like to thank Professor Stefano Lucidi for his suggestions in improving the paper.

\end{document}